\def\namedlabel#1#2{\begingroup
   \def\@currentlabel{#2}%
   \label{#1}\endgroup
}
\newcommand*{\barfix}[2][.175ex]{%
  \mathpalette{\@barfix{#1}}{#2}%
}
\newcommand*{\@barfix}[3]{%
  \vbox{%
    \kern#1\relax
    \hbox{$#2#3\m@th$}%
  }%
}
\newtheorem{thm}{Theorem}
\newtheorem{theorem}{Theorem}[section]
\newtheorem{lemma}[theorem]{Lemma}
\theoremstyle{definition}
\newcommand{\footremember}[2]{%
    \footnote{#2}
    \newcounter{#1}
    \setcounter{#1}{\value{footnote}}%
}
\newcommand{\footrecall}[1]{%
    \footnotemark[\value{#1}]%
}
\newcommand{\cC}{\mathcal{C}}
\newcommand{\cL}{\mathcal{L}}
\newcommand{\cQ}{\mathcal{Q}}
\newcommand{\e}{\mathrm{e}}
\newcommand{\eps}{\varepsilon}
\title{\vspace{-1.5cm}Cycle lengths in the percolated hypercube}
\author{%
Michael Anastos \footremember{alley1}{\scriptsize{Institute of Science and Technology Austria (ISTA), Klosterneurburg 3400, Austria. Email:  michael.anastos@ist.ac.at.}}%
\and Sahar Diskin \footremember{alley2}{\scriptsize{School of Mathematical Sciences, Tel Aviv University, Tel Aviv 6997801, Israel. Emails: sahardiskin@mail.tau.ac.il, krivelev@tauex.tau.ac.il.}}%
\and Joshua Erde \footremember{alley3}{\scriptsize{Institute of Discrete Mathematics, Graz University of Technology, Steyrergasse 30, 8010 Graz, Austria. Emails: erde@math.tugraz.at, kang@math.tugraz.at.}}%
\and Mihyun Kang \footrecall{alley3}%
\and Michael Krivelevich \footrecall{alley2}%
\and Lyuben Lichev \footremember{alley4}{\scriptsize{Institute of Statistics and Mathematical Methods in Economics, Technical University of Vienna, A-1040 Vienna, Austria. Email: lyuben.lichev@tuwien.ac.at.}}%
}
\date{\vspace{-1.5cm}}
\begin{document}

\maketitle

\begin{abstract}
Let $Q^d_p$ be the random subgraph of the $d$-dimensional binary hypercube obtained after edge-percolation with probability $p$. 
It was shown recently by the authors that, for every $\eps > 0$, there is some $c = c(\eps)>0$ such that, if $pd\ge c$, then typically $Q^d_p$ contains a cycle of length at least $(1-\eps)2^d$. 
We strengthen this result to show that, under the same assumptions, typically $Q^d_p$ contains cycles of all even lengths between $4$ and $(1-\eps)2^d$.
\end{abstract}

\section{Introduction}

The study of \emph{Hamiltonicity} of graphs is a central topic in probabilistic combinatorics. Whilst in general the problem of determining whether a graph is Hamiltonian is known to be computationally hard \cite{K72}, classic structural criteria for the existence of Hamilton cycles in dense graphs due to Dirac~\cite{Dir52}, Ore~\cite{Ore60}, Chv\'atal~\cite{Chv72}, and Bondy and Chv\'atal~\cite{BC76} have been known for the last half a century.
Yet, understanding the property in satisfactory generality is highly non-trivial and has given birth to some fascinating mathematical advances.
P\'osa~\cite{Pos76} pioneered the study of Hamiltonicity in random graphs, developing the \emph{rotation-extension method} and using it to show that $G(n,p)$, the random graph on $n$ vertices where each edge appears independently with probability $p$, is typically Hamiltonian when $np = C\log n$ for a suitably large constant $C$.
Variations of this method have proved instrumental in a number of works on the subject.
P\'osa's result was further improved by Korshunov~\cite{Kor77}, Bollob\'as~\cite{Bol84}, and Koml\'os and Szemer\'edi~\cite{KS83}. 
In the sparser regime, Ajtai, Koml\'os, and Szemer\'edi~\cite{AKS81a} showed that, for any constant $c>1$, $G(n,c/n)$ typically contains a cycle of length $(1-o_c(1))n$ (see also the independently obtained result of Fernandez de la Vega \cite{Fdlv79}).

A natural research direction generalising the advances on Hamiltonicity is the study of \emph{pancyclicity}. Denoting by $\cL(H)$ the set of cycle lengths in a given graph $H$, also known as the \emph{cycle spectrum} of $H$, we say that $H$ is pancyclic if $\cL(H)=\{3,\ldots, |V(H)|\}$ that is, if $H$ contains cycles of all possible lengths. In the setting of $G(n,p)$, Cooper and Frieze~\cite{CF90} established a hitting time result for pancyclicity in the random graph process, showing that minimum degree two is typically both a necessary and sufficient condition. 
In the sparser setting, Alon, the fifth author, and Lubetzky \cite{AKL22} showed that, for sufficiently large $c>0$ and for any $\ell=\ell(n)$ tending arbitrary slowly to infinity, typically $\cL(G(n,c/n)) \supseteq \{\ell,\ell+1,\ldots,(1-o(1))L_{max}\}$, where $L_{max}$ is the length of a longest cycle in $G(n,c/n)$. 
This was further improved by the first author~\cite{A23} who calculated the probability that, for $c>0$ sufficiently large, $G(n,c/n)$ is \textit{weakly pancyclic}, that is, contains cycles of all lengths between $3$ and the length of a longest cycle. 

The binomial random graph $G(n,p)$ is perhaps the simplest example of a model of \textit{$p$-bond percolation} where every edge of a host graph $G$ is retained independently with probability $p$, thus producing a random subgraph $G_p$. Thus, $G(n,p)$ can be seen as $p$-bond percolation on an $K_n$, the complete graph on $n$ vertices. Studying foundational topics like Hamiltonicity and pancyclicity in other models involving bond percolation is a natural and inviting question.

Perhaps the second most studied host graph after the complete graph is the high-dimensional binary hypercube.
For an integer $d\ge 1$, the \emph{$d$-dimensional hypercube} $Q^d$ is the graph with vertex set $\{0,1\}^d$ where every two vertices differing in a single coordinate form an edge.
The non-trivial lattice-like geometry of the hypercube $Q^d$ has proved a significant obstacle for adapting the methods used in the analysis of $G(n,p)$ to the percolated hypercube $Q^d_p$. 
For example, a phase transition for the existence of a giant component in percolated hypercubes was established by Ajtai, Koml\'os and Szemer\'edi~\cite{AKS82}, see also Bollob\'as, Kohayakawa and {\L}uczak~\cite{BKL92}, more than two decades after the original work of Erd\H{o}s and R\'enyi on $G(n,p)$~\cite{ER60}. 
More than that, thresholds for the existence of long cycles and of Hamiltonian cycles in $Q^d_p$ were resolved only recently~\cite{ADEKKL25,CEGKO24}. In a breakthrough, Condon, Espuny D\'{\i}az, Gir\~{a}o, K\"{u}hn and Osthus~\cite{CEGKO24} established a sharp threshold for Hamiltonicity in $Q^d_p$ at $p=1/2$. In fact, they even showed a stronger \emph{hitting time} result, and with minor modifcations of their approach, one can obtain a hitting time result for \textit{even pancyclicity} as well.
In the (much) sparser regime, the authors~\cite{ADEKKL25} showed that, for large enough $c>0$, \textbf{whp}\footnote{With high probability, that is, with probability tending to one as $d$ tends to infinity.} $Q^d_{c/d}$ contains a cycle of length $(1-o_c(1))2^d$.

In this paper, we consider the cycle spectrum of $Q^d_p$. Note that, since the hypercube is bipartite, 
$Q^d$ (and $Q^d_p$) does not contain any cycles of odd length. Our main result is an analogue of the result of~\cite{AKL22} in the setting of percolated hypercubes.

\begin{thm}\label{thm:main1}
For every $\eps\in (0,1)$, there exists a constant $c(\eps)>0$ such that, for all $c\geq c(\eps)$, with $p = p(d) = c/d$, \textbf{whp} $Q^d_p$ simultaneously contains all cycles of even length between $4$ and $(1-\eps) 2^d$. 
\end{thm}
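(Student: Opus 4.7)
The plan is to split the target interval $[4, (1-\eps)2^d]$ into a regime of \emph{short} even cycles and \emph{long} even cycles, and to handle them in parallel under a sprinkling decomposition. Concretely, we write $p = p_1 + p_2 - p_1 p_2$ with $p_1 = (1-\delta)p$, $p_2 = \delta p$, and $\delta = \delta(\eps)$ small, so that $Q^d_p$ has the same distribution as $Q^d_{p_1} \cup Q^d_{p_2}$ for independent samples. We use $Q^d_{p_1}$ to produce the skeleton -- a near-spanning cycle $C$ supplied by~\cite{ADEKKL25}, together with a reservoir of small cycles -- and we use $Q^d_{p_2}$ to sprinkle chords that realise each specific target length.

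For short even lengths $2k \le k_0(d) = o(d/\log d)$ we directly enumerate $2k$-cycles in $Q^d$. The standard count (each $2k$-cycle traverses $k$ distinct coordinates, each used exactly twice, along a fixed cyclic pattern) gives order $2^d \binom{d}{k} (2k-1)!/(4k)$ cycles, so that $\mathbb{E}[N_{2k}(Q^d_{p_1})] = \Theta\bigl(c^{2k} 2^d / (k d^k)\bigr)$, which is super-polynomially large in $d$ throughout this range. A second moment argument, exploiting that most pairs of short $Q^d$-cycles are edge-disjoint, together with a union bound over the $O(d/\log d)$ possible short lengths, yields the simultaneous existence of all required short cycles whp.

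For lengths $\ell$ in the range $[k_0(d), (1-\eps)2^d]$, we take the cycle $C = v_0 v_1 \dots v_{L-1} v_0 \subseteq Q^d_{p_1}$ of length $L = (1-\eps/2)2^d$ guaranteed by~\cite{ADEKKL25}. Each hypercube edge $\{v_i, v_j\}$ with $|i-j|$ odd is a potential chord; a chord realised in $Q^d_{p_2}$ splits $C$ into two cycles of even lengths $|i-j|+1$ and $L-|i-j|+1$, with the parity automatically correct since a chord crosses the bipartition of $Q^d$. We aim to show that for every odd value of $|i-j|$ the number of hypercube-adjacent $C$-pairs at that $C$-distance is $\Omega(d)$ on average, so that sprinkling with $p_2 = \Theta(1/d)$ produces $\Omega(1)$ realised chords per target length. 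A second moment computation combined with a union bound over the $O(2^d)$ even target lengths should then finish this regime.

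The hard part is precisely this last step: controlling the distribution of chord distances along $C$. The existence statement of~\cite{ADEKKL25} does not a priori guarantee that hypercube-adjacent $C$-pairs are spread across every odd $C$-distance, and an adversarial embedding of $C$ into $Q^d$ could in principle concentrate its potential chords at a small number of distances. We therefore expect to need a strengthened construction of $C$ -- for example one obtained from a random spanning structure such as a BFS or DFS exploration, or from a branching-process embedding, that produces roughly uniform chord statistics -- or else to supplement the chord mechanism with local modifications: rotations of subpaths of $C$ using hypercube-neighbours outside $C$, and short detours through vertices in $V(Q^d) \setminus V(C)$, each of which shifts cycle length by a small even increment. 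Combining these small shifts with the chord-realised lengths should then interpolate across every even value in the long regime while respecting the bipartite parity constraint of $Q^d$.
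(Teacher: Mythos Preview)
Your proposal has a genuine gap in the long-cycle regime, and you already half-recognise it.

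The chord-sprinkling idea cannot work as stated, even granting the chord-distribution hypothesis you flag as unresolved. Count edges: the total number of $Q^d$-edges with both endpoints on $C$ is at most $d|C|/2 = \Theta(d\cdot 2^d)$, while there are $\Theta(|C|) = \Theta(2^d)$ odd $C$-distances to cover. So the \emph{average} number of potential chords per target distance is $\Theta(d)$, and you cannot hope for more than that at a typical distance. Sprinkling each with probability $p_2 = \Theta(1/d)$ then yields $\Theta(1)$ realised chords in expectation per length, so the per-length failure probability is bounded away from~$0$. A union bound over $\Theta(2^d)$ target lengths blows up, and no second-moment refinement rescues this: the events for different target lengths depend on disjoint edge sets and are essentially independent, so the number of missed lengths has mean $\Theta(2^d)$.

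The paper avoids this obstruction by never insisting on a single chord. For a target length $2\ell$ it takes a subpath $P$ of $C$ of length $2\ell$ minus a slack (of order $d^2$ in the medium range, $2^d/d^8$ in the top range), connects the two endpoints of $P$ by a path through the giant component of $Q^d_p$ restricted to a \emph{reserved} region (a disjoint subcube, or a disjoint random vertex class under mixed percolation). Hitting a linear-sized giant with a couple of sprinkled edges succeeds with probability $1 - o(2^{-d})$, and the diameter bound on the giant controls the length of the connecting path. The remaining $O(\text{slack})$ discrepancy is then corrected by replacing that many edges of $P$ by vertex-disjoint $3$-paths through a third reserved region. The ``local modification'' you mention in your final paragraph is exactly this correction step, but in the paper it carries the entire fine-tuning load rather than supplementing a chord mechanism.

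For the short range the paper also proceeds differently from your second-moment count: it builds cycles of length up to $d/8$ explicitly from pairs of maximal monotone paths in many disjoint subcubes (failure probability $\exp(-1.1^D)$), and then bootstraps to length $d^{11}$ by an inductive edge-replacement lemma that grows a cycle by attaching monotone-path detours in fresh subcubes. Your second-moment approach is plausible for $2k = o(d/\log d)$, but note that your long-cycle mechanism would then have to cover every even length from $o(d/\log d)$ upward, and the chord problem is worst precisely at these short target lengths.
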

We note that the dependency of $c$ in $\eps$ which we obtain is inverse polynomial. Let us further note that Bollob\'as (see \cite{bollobas2001random}, \S 4.1), and separately Karo\'nski and Ruci\'nski~\cite{karonckirusinci} proved that for fixed $\ell$ the number of cycles of length $\ell$ in $G(n,c/n)$ converges to Poisson($c^\ell/(2\ell)$), hence $G(n,c/n)$ does not contain a cycle of length $\ell$ with probability uniformly bounded away from $0$. On the other hand, a second moment argument shows that typically $Q_{c/d}^d$ does contain small cycles of a given even length.

Our work leaves some natural open problems. 
It would be very interesting to determine if, analogous to the case of $G(n,p)$, when $pd$ is sufficiently large, $Q^d_p$ is typically \emph{weakly even-pancyclic}, that is, it contains cycles of all even lengths up to $L_{max}$. 
Settling a well-known conjecture of the fifth author and Sudakov \cite{KS03}, a recent work by Dragani\'c, Montgomery, Munh\'a Correia, Pokrovskiy and Sudakov \cite{DMCPS24} showed that \textit{$C$-expanders} are Hamiltonian. Results for cycle lengths have also been shown for \emph{pseudorandom graphs}~\cite{FK21,HMT09} (which are, in particular, $C$-expanders). It would be interesting to know if these results are \emph{robust} enough to still hold after percolation, as is known to be the case for the existence of long cycles, see~\cite{DK24}. 
The fifth author, Lee and Sudakov \cite{KLS10} studied the \textit{local resilience} of pancyclicity in $G(n,p)$. Lee and Samotij \cite{LS12} considered the \textit{global resilience} of pancyclicity in $G(n,p)$. Similar questions about resilience, both local and global, can now be asked in the setting of the percolated hypercube.

\paragraph{Structure of the paper.}
In Section \ref{s: prelim}, we set out notation and present some preliminary results.
Section \ref{sec:proof} is dedicated to the proof of Theorem~\ref{thm:main1}.

\section{Preliminaries}\label{s: prelim}
\subsection{Notation}
For a positive integer $n$, we write $[n] = \{1,\ldots,n\}$.
Rounding notation is systematically omitted for better readability whenever it does not affect the validity of our arguments. All logarithms are with respect to the natural basis $\e$. We denote by $\mathbb{N}$ the set of natural numbers, and by $2\mathbb{N}$ the set of \textit{even} natural numbers. 

Given a graph $G$, we denote its vertex set by $V(G)$ and its edge set by $E(G)$. For a set $A\subseteq V(G)$, we denote by $N_G(A)$ the external neighbourhood of $A$ in $G$, that is, the neighbours of $A$ in $V(G)\setminus A$. 
Given $A,B\subseteq V(G)$ with $A\cap B=\varnothing$, we denote by $e_G(A,B)$ the number of edges in $G$ with one endpoint in $A$ and the other endpoint in $B$. 
If the graph $G$ is clear from context, we often omit the subscript. Furthermore, we denote by $G[A]$ the subgraph of $G$ induced by $A$. Given $v\in V(G)$ and $r\in \mathbb{N}$, we denote by $B(v,r)$ the ball of radius $r$ centred at $v$, that is, the set of vertices at distance at most $r$ from $v$.

In the hypercube $Q^d$, the \emph{$i$-th layer} consists of the vertices of $Q^d$ with exactly $i$ coordinates equal to 1. A \textit{monotone path} in the hypercube is a path containing at most one vertex on each layer of that hypercube. A monotone path is \emph{maximal} when its length (that is, number of edges) coincides with the dimension of the host hypercube.

Given $p\in [0,1]$, we form $Q^d_p$ by retaining every edge of $Q^d$ independently and with probability $p$. Given $q,p\in [0,1]$, we define the \emph{mixed-percolated hypercube} $Q^d_p(q)$ as the graph $Q^d_p[V_q]$ where $V_q \subseteq V(Q^d)$ is a random set obtained by retaining every vertex independently with probability~$q$.

Throughout the paper, when considering subgraphs $H\subseteq Q^d$, we slightly abuse notation and write $H_p$ for the random subgraph $H \cap Q^d_p$. This naturally couples the graphs $H_p$ and $H'_p$ for different $H,H' \subseteq Q^d$.

\subsection{Auxiliary results}
We will use the following classical Chernoff bound (see, for example, \cite[Theorem A.1.12]{AS16}).
\begin{lemma}\label{l: chernoff}
For any binomial random variable $X$ and $a\in [0,\mathbb EX]$,
\begin{align*}
    \mathbb{P}\left(|X-\mathbb EX|\ge a\right)\le 2\exp\left(-\frac{a^2}{3\mathbb EX}\right).
\end{align*}
\end{lemma}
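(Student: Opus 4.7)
The plan is to derive both tails via the classical Chernoff--Cram\'er moment-generating-function method and then combine them. Write $X = \sum_{i=1}^n Y_i$ as a sum of independent Bernoulli random variables with $\mathbb{P}(Y_i = 1) = p_i$, and set $\mu = \mathbb{E}X = \sum_{i=1}^n p_i$. Using independence together with the elementary inequality $1 + x \leq e^x$,
$$\mathbb{E}\bigl[e^{tX}\bigr] = \prod_{i=1}^n \bigl(1 - p_i + p_i e^t\bigr) \leq \exp\!\bigl(\mu(e^t - 1)\bigr)$$
for every real $t$. This is the single ingredient that will replace any specific use of the binomial distribution.

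For the upper tail, I would apply Markov's inequality to $e^{tX}$ with $t>0$, giving
$$\mathbb{P}(X \geq \mu + a) \leq \exp\!\bigl(\mu(e^t - 1) - t(\mu + a)\bigr),$$
and then optimise by choosing $t = \log(1 + a/\mu)$. This yields the sharp form $\mathbb{P}(X \geq \mu + a) \leq \exp(-\mu\,\varphi(a/\mu))$, where $\varphi(x) = (1+x)\log(1+x) - x$. The lower tail is treated symmetrically by applying Markov to $-X$ with $t>0$, leading to $\mathbb{P}(X \leq \mu - a) \leq \exp(-\mu\,\psi(a/\mu))$ with $\psi(x) = (1-x)\log(1-x) + x$. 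A direct comparison of derivatives at $0$ shows $\psi \geq \varphi$ on $[0,1)$, so the upper-tail bound also dominates the lower tail.

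It remains to absorb the sharp exponent into the stated form $\exp(-a^2/(3\mu))$. The standard calculus inequality $\varphi(x) \geq x^2/(2 + 2x/3)$ for $x \geq 0$ (verified by checking values and derivatives at $0$) gives $\exp(-\mu\,\varphi(a/\mu)) \leq \exp(-a^2/(2\mu + 2a/3))$. For $a \in [0,\mu]$, the denominator is at most $8\mu/3$, so the exponent is at most $-3a^2/(8\mu) \leq -a^2/(3\mu)$. Summing the resulting bounds for the two tails supplies the factor $2$ in the statement. The only mildly delicate point is the verification of the $\varphi$-inequality, which is the main (and only truly computational) obstacle; everything else is a mechanical optimisation of an exponential moment.
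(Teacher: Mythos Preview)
Your argument is correct and is precisely the standard Chernoff--Cram\'er derivation. However, there is nothing to compare against: the paper does not prove this lemma at all but simply quotes it as a classical bound with a reference to \cite[Theorem~A.1.12]{AS16}. So your proposal supplies a full proof where the paper gives only a citation; the route you take (exponential moments, optimisation in $t$, the inequality $\varphi(x)\ge x^2/(2+2x/3)$, and the observation that the lower-tail exponent $\psi$ dominates $\varphi$) is exactly the textbook approach and would be accepted without reservation.
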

We will often use Lemma \ref{l:  chernoff} when estimating a one-sided tail bound of $X\sim Bin(n,p)$, noting that
\begin{align*}
    \mathbb{P}\left[X\le a\right]\le \mathbb{P}\left[|X-\mathbb EX|\ge a-\mathbb EX\right], \quad \text{and}\quad \mathbb{P}[X\ge a]\le \mathbb{P}[|X-\mathbb EX|\ge a-\mathbb EX].
\end{align*}

Next, we state a simplified version of the main theorem in~\cite{ADEK24}, estimating the probability of existence of maximal monotone paths in supercritical percolation on high-dimensional hypercubes.
\begin{theorem}[see Theorem~1 in~\cite{ADEK24}]\label{thm:monotone}
For all sufficiently large integers $D$, the following holds. Let  $\rho = \rho(D) > \e/D$ then,~$Q^D_\rho$ contains a maximal monotone path with probability at least~$1/2$.
\end{theorem}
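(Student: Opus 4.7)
The plan is to apply the second moment method to $X$, the number of maximal monotone paths in $Q^D_\rho$. Maximal monotone paths in $Q^D$ necessarily run from $\vec{0}$ to $\vec{1}$ and are in bijection with permutations $\pi \in S_D$ recording the order of coordinate flips; each such path is retained independently with probability $\rho^D$. Hence $\mathbb{E}[X] = D! \rho^D$, which by Stirling's formula exceeds $\sqrt{2\pi D}(1+o(1))$ whenever $\rho > \e/D$; in particular $\mathbb{E}[X] \to \infty$. The goal is then, via Paley-Zygmund in the form $\mathbb{P}(X \geq 1) \geq \mathbb{E}[X]^2/\mathbb{E}[X^2]$, to show that the ratio $\mathbb{E}[X^2]/\mathbb{E}[X]^2$ stays bounded and is sufficiently close to $1$ to conclude $\mathbb{P}(X \geq 1) \geq 1/2$.

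Expanding $\mathbb{E}[X^2] = \sum_{\pi_1, \pi_2} \rho^{|E(P_{\pi_1}) \cup E(P_{\pi_2})|}$ and using the transitivity of $S_D$ to fix $\pi_1 = \mathrm{id}$ with $\sigma := \pi_2$ uniform, the ratio reduces to $\mathbb{E}[X^2]/\mathbb{E}[X]^2 = \mathbb{E}_\sigma[\rho^{-K(\sigma)}]$, where $K(\sigma) = |E(P_{\mathrm{id}}) \cap E(P_\sigma)|$. The key combinatorial observation is that $P_{\mathrm{id}}$ and $P_\sigma$ share their $i$-th edge precisely when $\sigma$ setwise fixes $\{1, \ldots, i-1\}$ and satisfies $\sigma(i) = i$; equivalently, $K(\sigma)$ counts the singleton intervals in the canonical decomposition of $\sigma$ into maximal indecomposable blocks. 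Using the classical generating function identity $(1 - f(x))^{-1} = \sum_D D! x^D$, where $f(x) = \sum_{n \geq 1} I(n) x^n$ enumerates indecomposable permutations, one obtains an explicit series for $\mathbb{E}_\sigma[\rho^{-K(\sigma)}]$ that can be analyzed term-by-term; the dominant contributions turn out to come from $\sigma$ with only a small number of singleton blocks clustered near the endpoints of $[D]$.

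The main obstacle will be making this estimate sharp enough that Paley-Zygmund yields the clean bound $\mathbb{P}(X \geq 1) \geq 1/2$: a straightforward asymptotic evaluation at $\rho = \e/D$ produces a ratio of order $\e^2/(\e-1)^2 \approx 2.51$, which falls short of the constant $2$ that Paley-Zygmund would require. To close this gap, one could apply the second moment to a carefully chosen sub-collection of paths---for instance, those whose corresponding permutation is itself indecomposable, thereby removing the dominant contribution from singleton blocks---or augment the argument with a sprinkling step: first percolate with probability $\rho_1$ slightly below $\rho$ and run a BFS exploration from $\vec{0}$ that reaches an intermediate layer with probability bounded away from zero, then use the remaining probability $\rho_2$ to sprinkle and complete the path to $\vec{1}$. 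Monotonicity of the event in $\rho$ should allow either refinement to deliver the claimed probability bound uniformly over $\rho > \e/D$.
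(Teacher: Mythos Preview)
The paper does not prove this statement at all: it is quoted as Theorem~1 from~\cite{ADEK24} and used as a black box. There is therefore no ``paper's own proof'' to compare against; if you want to supply a proof, you are effectively reproving an external result rather than filling in something the authors did.

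On the merits of your sketch: the second-moment setup is correct (the bijection with $S_D$, the identity $\mathbb{E}[X^2]/\mathbb{E}[X]^2=\mathbb{E}_\sigma[\rho^{-K(\sigma)}]$, and the description of $K(\sigma)$ via indecomposable blocks are all fine), and you are right that the raw Paley--Zygmund bound at $\rho$ just above $\e/D$ gives roughly $(\e-1)^2/\e^2<1/2$, so the argument as written does \emph{not} close. The two fixes you float are plausible in spirit but are not carried out, and neither is obviously routine: restricting to indecomposable permutations changes both the first and second moments and you would need to redo the generating-function analysis to check the ratio actually drops below~$2$; the sprinkling idea needs a genuine structural input (e.g.\ that a supercritical BFS from $\vec 0$ reaches a layer of positive density with probability bounded below) which is itself nontrivial in this setting. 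As it stands, your write-up is a strategy with an acknowledged gap rather than a proof; for the purposes of this paper, simply citing~\cite{ADEK24} as the authors do is the appropriate choice.
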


We will also utilise the following estimate on the probability of the existence of maximal monotone paths in percolated hypercubes.
\begin{lemma}\label{lem:short}
Let $D\geq 1$ be an integer and $\rho=\rho(D) \le 1/D$. Then, $Q^D_\rho$ contains a maximal monotone path with probability at least $(\rho D/(2\e))^D$.
\end{lemma}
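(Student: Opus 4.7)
My plan is to construct a maximal monotone path greedily, layer by layer, starting from the all-zeros vertex $\vec 0$. At step $i \in [D]$, having reached a vertex $v_{i-1}$ at layer $i-1$, I reveal the $D-i+1$ edges between $v_{i-1}$ and its neighbours at layer $i$; if at least one of these edges is present in $Q^D_\rho$ I move to the corresponding neighbour $v_i$ along any such edge, otherwise I declare failure. A successful run outputs a maximal monotone path, so its success probability lower bounds the quantity of interest.

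The key observation is that the edge batches exposed at distinct steps are pairwise disjoint: the edges exposed at step $i$ connect layers $i-1$ and $i$ (with the layer-$(i-1)$ endpoint being $v_{i-1}$), so edges exposed at step $i'\ne i$ live between a different pair of layers and are consequently distinct. In particular, conditional on the first $i-1$ steps succeeding (which determines $v_{i-1}$), the $D-i+1$ fresh edges are independent $\mathrm{Bernoulli}(\rho)$ variables, so step $i$ succeeds with conditional probability $1-(1-\rho)^{D-i+1}$. Multiplying these gives
$$\mathbb{P}[\text{greedy succeeds}] \;=\; \prod_{i=1}^D \bigl(1-(1-\rho)^{D-i+1}\bigr) \;=\; \prod_{j=1}^D \bigl(1-(1-\rho)^j\bigr).$$

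The remaining step is purely analytic. Combining $(1-\rho)^j \le \e^{-j\rho}$ with the elementary inequality $1-\e^{-y}\ge y/2$ valid for $y \in [0,1]$ (which applies here because $j\rho\le D\rho\le 1$ by the hypothesis $\rho\le 1/D$), I get $1-(1-\rho)^j \ge j\rho/2$ for every $j\in [D]$. Multiplying these bounds over $j$ gives $\prod_{j=1}^D j\rho/2 = (\rho/2)^D\, D!$, and applying Stirling's bound $D! \ge (D/\e)^D$ yields the claimed lower bound $(\rho D/(2\e))^D$.

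I do not anticipate any substantial obstacle: the argument is a short greedy exposure whose only non-trivial ingredients are the disjointness of edge batches across steps (which ensures that the per-step success probabilities multiply) and the two routine inequalities $1-\e^{-y}\ge y/2$ for $y\in[0,1]$ and $D!\ge (D/\e)^D$.
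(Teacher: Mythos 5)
Your proof is correct and follows essentially the same approach as the paper: a greedy layer-by-layer exposure from the all-zeros vertex, yielding the product $\prod_{j=1}^D(1-(1-\rho)^j)$, which is then bounded below using $1-t\le\e^{-t}$, the inequality $1-\e^{-y}\ge y/2$ for $y\in[0,1]$, and $D!\ge(D/\e)^D$. The only difference is trivial re-indexing (the paper indexes steps from $0$ to $D-1$).
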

\begin{proof}
We analyse the following random greedy algorithm. Denote by $v_0$ the all-zero vertex of $Q^D$, and set $\mathbf{i} = 0$. At each step $0\le \mathbf{i}\le D-1$, consider the vertex $v_{\mathbf{i}}$ and expose the edges from $v_{\mathbf{i}}$ to the $(\mathbf{i}+1)$-st layer in the hypercube $Q^D_\rho$. 
If no such edge exists, the algorithm terminates. Otherwise, fix the first such edge (according to an arbitrary order), call its other endpoint $v_{\mathbf{i}+1}$ and increment $\mathbf{i}$ by 1.

Note that, in this algorithm, no edge is exposed more than once. Moreover, since every vertex on the $i$-th layer contains $D-i$ edges towards the $(i+1)$-st layer for all $i\in [0,D-1]$, conditionally on the event that the algorithm does not terminate before reaching layer $i$, the probability that it reaches layer $i+1$ is $\mathbb P(\mathrm{Bin}(D-i,\rho)\ge 1) = 1 - (1-\rho)^{D-i}$.
Hence, the probability that the algorithm reaches layer $D$, which implies the existence of a maximal monotone path, is
\[\prod_{i=0}^{D-1}(1 - (1-\rho)^{D-i}) = \prod_{j=1}^{D}(1 - (1-\rho)^j)\ge \prod_{j=1}^{D}(1 - \e^{-j \rho})\ge D! \bigg(\frac{\rho}{2}\bigg)^D \ge \bigg(\frac{\rho D}{2 \e}\bigg)^D,\]
where the first inequality above uses that $1-t\leq \e^{-t}$ for every $t\ge 0$, the second inequality holds since $1-\e^{-t}\ge t/2$ for every $t\in [0,1]$, and the third inequality uses that $D!\ge (D/\e)^D$ for every integer $D\ge 1$.
\end{proof}

Our proof also critically relies on the aforementioned recent result from~\cite{ADEKKL25} on the existence of a nearly spanning cycle in the (mixed) percolated hypercube.
\begin{theorem}[Theorem 1 and Remark 1 in \cite{ADEKKL25}]\label{thm:long cycle}
For every fixed $q\in (0,1]$ and $\eps \in (0,1)$, there exists a constant $c_0 = c_0(q,\eps) > 0$ such that, for every $c \ge c_0$, \textbf{whp} a longest cycle in $Q^d_{c/d}(q)$ has length at least $(1-\eps) q\cdot 2^d$.
\end{theorem}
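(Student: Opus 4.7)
My plan for proving Theorem~\ref{thm:long cycle} combines a multi-round sprinkling argument with a Pósa-type rotation-extension technique adapted to the hypercube geometry. I would decompose the percolation probability as $c/d = p_1 + p_2$ with $p_2 = c_2/d$ for a sufficiently large constant $c_2 = c_2(q,\eps)$, realising $Q^d_{c/d}(q)$ as the union of $Q^d_{p_1}(q)$ with an independent random edge set $E(p_2)$ on the retained vertex set. The first round would build a large connected ``core'' with good expansion, and the second round would supply the ``boosters'' needed to turn this core into a long cycle.

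For the first round, I would analyse the exploration process in $Q^d_{p_1}(q)$ by coupling it with a supercritical branching process whose offspring distribution has mean approximately $c_1 q$, where $c_1 = c - c_2$. Standard sprinkling arguments should then yield \whp{} that the giant component $\cC$ of $Q^d_{p_1}(q)$ has size at least $(1-\delta) q \cdot 2^d$, where $\delta = \delta(c_1) \to 0$ as $c_1 \to \infty$. The uniqueness of the giant and the concentration of its size would rely on the vertex-isoperimetric inequality of the hypercube (Harper's theorem) to rule out intermediate or multiple giant components. A further splitting $p_1 = p_1' + p_1''$ would let me use the $p_1''$-edges to upgrade $\cC$ to an \emph{expander}: every subset $S \subseteq V(\cC)$ with $|S| \le |V(\cC)|/2$ satisfies $|N_{\cC}(S)| \ge \alpha |S|$ for some absolute constant $\alpha > 0$ depending on $c_1, q$.

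Given this core $\cC$ with expansion, I would apply Pósa's rotation-extension method within $\cC \cup E(p_2)$. Expansion guarantees that, starting from any longest path, one generates a large family of rotated paths with many distinct endpoints. Combined with $c_2$ being a large constant, this ensures \whp{} that a booster edge from $E(p_2)$ exists, producing either a strictly longer path or a cycle through a $(1-\eps)$-fraction of $V(\cC)$. Iterating the booster lemma along a judicious sequence of subgraph modifications, until no further booster can be found, would yield a cycle of length at least $(1-\eps)|V(\cC)| \ge (1-\eps)q \cdot 2^d$.

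The main obstacle lies in the bounded degree of the hypercube: each vertex has only $d$ potential neighbours, of which only about $c$ survive percolation, which is far sparser than the $G(n,p)$ setting. Hence a naive rotation-extension produces only polynomially many endpoint candidates — insufficient to hit a $p_2$-booster edge \whp. The key technical insight must therefore be to exploit the hypercube's self-similar product structure, ideally via a decomposition into moderate-dimensional subcubes in which Theorem~\ref{thm:monotone} and Lemma~\ref{lem:short} supply independent monotone-path seeds, in order to boost the number of distinct endpoint candidates by a factor exponential in $d$ and make the booster argument viable. Establishing this robust expansion within $\cC$, while simultaneously handling the interaction between the vertex-percolation with parameter $q$ and the edge-percolation, will be the most delicate step of the proof.
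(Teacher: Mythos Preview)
The paper does not prove \Cref{thm:long cycle} at all: it is quoted verbatim as ``Theorem~1 and Remark~1 in~\cite{ADEKKL25}'' and used as a black box throughout Section~\ref{sec:proof}. There is therefore no proof in the present paper against which to compare your proposal.

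That said, your outline is a reasonable high-level caricature of the strategy in~\cite{ADEKKL25}, but you have correctly identified the genuine difficulty and not resolved it. The bounded-degree obstacle you flag is exactly the reason that a direct P\'osa rotation-extension argument, as used for $G(n,p)$, fails for $Q^d_p$: rotations produce only polynomially many endpoints, whereas one needs exponentially many to guarantee a booster edge with the requisite probability. The actual proof in~\cite{ADEKKL25} does not overcome this by ``boosting the number of endpoint candidates'' via monotone paths in subcubes; rather, it uses a DFS-based construction to find many long vertex-disjoint paths in disjoint subcubes, and then stitches these paths together into a single long cycle using sprinkled edges between adjacent subcubes, exploiting the product structure of $Q^d$ for the merging step rather than for the rotation step. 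Your proposal, as it stands, contains the right ingredients list but not the recipe: the step ``exploit the hypercube's self-similar product structure \ldots\ to boost the number of distinct endpoint candidates'' is where all the work lies, and the mechanism you sketch (monotone-path seeds feeding into rotation-extension) is not the one that is known to succeed.
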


We require some results on the component structure and typical properties of the giant component in bond percolation on the hypercube.
We recall that the \emph{order} of a connected graph is the number of vertices in it.

\begin{theorem}\label{th: bond percolation}
Let $c>1$ and $p=c/d$. Then, there exist constants $C = C(c)>0$ and $y=y(c)>0$ such that the following properties hold \textbf{whp}.
\begin{enumerate}[\upshape{(\alph*)}]
    \item\label{i: bond giant} There exists a unique giant component in $Q^d_p$ whose order is at least $y 2^d$. All other components have order at most $Cd$.
    \item\label{i: bond diameter} The diameter of the giant component is at most $Cd(\log d)^2$.
\end{enumerate}
\end{theorem}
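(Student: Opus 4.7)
The plan is to prove both parts via a sprinkling decomposition: write $p = p_1 + p_2 - p_1 p_2$ with $p_1 d > 1$ and $p_2 d$ a sufficiently large constant depending on $c$, so that $Q^d_p$ has the same distribution as the edge-union of two independent percolations $Q^d_{p_1}$ and $Q^d_{p_2}$. The first round will build many moderately large components through a BFS/branching-process comparison, and the sprinkled round will merge them into a unique giant and then contract distances within it.

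For part~\ref{i: bond giant}, I would first run BFS from an arbitrary vertex $v$ in $Q^d_{p_1}$. As long as at most $\eta 2^d$ vertices have been explored (for a small $\eta=\eta(c)>0$), each boundary vertex has $d-o(d)$ unexplored neighbours, so the exploration stochastically dominates a Galton--Watson process with $\mathrm{Bin}(d-o(d),p_1)$ offspring distribution, whose mean exceeds $1$. Standard branching-process estimates then give a constant $\rho(c)>0$ such that, with probability at least $\rho(c)$, the component of $v$ has order at least $K:=C'd$ for any prescribed $C'$. To rule out components of order in the intermediate range $[K, y 2^d]$, I would invoke Harper's edge-isoperimetric inequality, which yields $|\partial S| \ge |S|\log_2(2^d/|S|)$ for every $S\subseteq V(Q^d)$ with $|S|\le 2^{d-1}$. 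Counting connected subsets of $Q^d$ of size $k$ through their spanning trees (at most $2^d(\e d)^{k-1}$ of them) and multiplying by $(1-p)^{|\partial S|}$ gives an expected count that is $o(1)$ summed over $k \in [K, y 2^d]$, provided $C'$ is large and $y$ small in terms of $c$. Finally, any two ``large'' vertex sets of order $\ge y 2^d$ span $\Omega(y^2 d \cdot 2^d)$ edges in $Q^d$ by isoperimetry, so $Q^d_{p_2}$ retains at least one edge between every such pair with probability $1-o(1)$; a union bound over pairs then produces a unique giant component of order at least $y 2^d$.

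For part~\ref{i: bond diameter}, it suffices to show that each vertex $v$ of the giant has a BFS ball in $Q^d_p$ of radius at most $Cd(\log d)^2/2$ containing more than $2^{d-1}$ vertices, since two such balls must then intersect. The initial $O(d)$ BFS layers in $Q^d_{p_1}$ reach a set $B_0$ of size $\ge K$ (otherwise $v$'s component would be too small, contradicting part~\ref{i: bond giant}). To grow $B_0$ further, I would split $p_2$ into $\Theta(\log d)$ independent sub-rounds and iterate a ``sprinkle-and-expand'' step: given a current ball $B$ with $K\le |B|\le 2^{d-1}$, Harper's vertex-isoperimetric inequality gives $|N_{Q^d}(B)|\ge |B|/\sqrt d$, and a Chernoff bound on a fresh sub-round shows that sprinkled edges connect a constant fraction of $N_{Q^d}(B)$ to $B$ through short paths of length $O(d\log d)$ inside the sprinkled subgraph. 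After $O(\log 2^d) = O(d)$ such doublings the ball reaches size $2^{d-1}$.

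The principal technical obstacle lies precisely in this expansion step: near $|B|=2^{d-1}$ the relative vertex boundary of $B$ in $Q^d$ drops to $\Theta(1/\sqrt d)$, so plain BFS in $Q^d_{p_1}$ gives only a $(1+o(1))$-growth factor per step; one must combine isoperimetry with fresh sprinkling rounds to amplify this to a constant, while carefully book-keeping independence so that Chernoff can be applied at every expansion level. The resulting $O(d(\log d)^2)$ bound is intentionally loose (the true diameter is $\Theta(d)$), and is more than enough for the rest of the paper.
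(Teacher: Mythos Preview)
The paper does not prove this theorem at all: it simply quotes it from the literature, attributing part~\ref{i: bond giant} to Ajtai--Koml\'os--Szemer\'edi and Bollob\'as--Kohayakawa--{\L}uczak (with the remark that \cite{K23} gives a short self-contained proof), and part~\ref{i: bond diameter} to \cite[Theorem~6(a)]{DEKK24}. So there is nothing to compare against beyond noting that your sketch for~\ref{i: bond giant} is essentially the argument of~\cite{K23} (branching-process lower bound, tree-counting gap estimate, sprinkling merge), and that is fine.

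Your sketch for~\ref{i: bond diameter}, however, has real gaps. First, your sufficiency criterion ``each BFS ball in $Q^d_p$ has more than $2^{d-1}$ vertices'' cannot hold for $c$ close to $1$: the BFS ball of any vertex of the giant is contained in the giant, and for small $c>1$ the giant has only $y(c)2^d$ vertices with $y(c)\to 0$ as $c\downarrow 1$. You should aim for balls of size exceeding $|L_1|/2$, not $2^{d-1}$. Second, and more seriously, the sprinkling bookkeeping is inconsistent. You split $p_2$ into $\Theta(\log d)$ fresh sub-rounds but then say you need $O(d)$ ``doublings''; either you reuse rounds (losing independence) or each sub-round carries edge probability $\Theta(1/(d\log d))$, which is subcritical, so the claimed ``short paths of length $O(d\log d)$ inside the sprinkled subgraph'' do not exist with useful probability. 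Moreover, sprinkling cannot amplify the \emph{vertex} expansion of $Q^d$ itself: near $|B|=2^{d-1}$ the vertex boundary in the full hypercube is only $\Theta(|B|/\sqrt d)$, so even capturing the entire boundary at every step you need $\Omega(\sqrt d)$ steps to gain a constant factor, and your ``combine isoperimetry with fresh sprinkling to amplify to a constant'' sentence has no mechanism behind it. The argument in~\cite{DEKK24} proceeds differently and controls these issues; if you want to give a self-contained proof here rather than cite, you would need to rethink the expansion phase substantially.
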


\Cref{th: bond percolation}\ref{i: bond giant} follows from the classical results of Ajtai, Koml\'os and Szemer\'edi~\cite{AKS82} and Bollob\'as, Kohayakawa and {\L}uczak~\cite{BKL92} (see \cite{K23} for a simple and self-contained proof). \Cref{th: bond percolation}\ref{i: bond diameter} follows from \cite[Theorem 6(a)]{DEKK24} (note that, whilst \cite[Theorem 6(a)]{DEKK24} is stated for $c$ sufficiently close to $1$, as discussed in \cite[Page 747, first paragraph]{DEKK24}, the results naturally extend to any constant $c>1$ when allowing the constant $C$ to depend on $c$).

We also require some variant of the above results (and other typical properties) in the setting of mixed percolation. 
\begin{theorem}\label{th: mixed percolation}
Let $q\in (0,1]$. Then, there exists a constant $c_0 = c_0(q)$ such that, for all $c\ge c_0$, there are $y = y(c,q) > 0$ and $C = C(c,q) > 0$ such that the following properties hold~\textbf{whp}. 
\begin{enumerate}[\upshape{(\alph*)}]
    \item\label{i: mixed giant} There exists a unique giant component in $Q^d_{c/d}(q)$ whose order is at least $y2^d$. All other components have order at most $Cd$.
    \item\label{i: mixed diameter} The diameter of the giant component is at most $2^d/d^{8}$.
\end{enumerate}
\end{theorem}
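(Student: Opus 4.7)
The plan is to deduce \Cref{th: mixed percolation} from \Cref{th: bond percolation} by a two-round exposure (sprinkling) argument. Pick $p_1 = c_1/d$ with $c_1$ strictly above the threshold needed to invoke \Cref{th: bond percolation} on $Q^d_{p_1}$, and let $p_2 \in [0,1]$ satisfy $1-(1-p_1)(1-p_2) = c/d$; for $d$ large, $p_2 = \Theta(1/d)$, and by taking $c_0$ large we may assume $c_2 := p_2 d$ is as large as needed. Then $Q^d_{c/d}$ has the same distribution as $G_1 \cup G_2$ with $G_1 \sim Q^d_{p_1}$, $G_2 \sim Q^d_{p_2}$ independent; taking an independent vertex-percolation set $V_q$, this couples $Q^d_{c/d}(q) = (G_1 \cup G_2)[V_q]$.

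For part \ref{i: mixed giant}, I would apply \Cref{th: bond percolation} to $G_1$, obtaining \whp{} a unique giant $\cC_1$ of order at least $y_1 2^d$ with all other $G_1$-components of order at most $C_1 d$. Every component of $(G_1\cup G_2)[V_q]$ disjoint from $\cC_1$ therefore has order at most $C_1 d$. Chernoff gives $|\cC_1 \cap V_q| \geq q y_1 2^d/2$ \whp{}. The heart of the argument is to show that, after sprinkling the $G_2$-edges inside $V_q$, almost all of $\cC_1 \cap V_q$ sits in one component of $(G_1 \cup G_2)[V_q]$. For this, I would bound from below the $Q^d$-edge boundary between any union of components $S \subseteq \cC_1 \cap V_q$ of moderate size (say $|S| \in [C'd, (1-\eta)|\cC_1\cap V_q|]$) and $(\cC_1 \cap V_q) \setminus S$, combining Harper's edge-isoperimetric inequality on $Q^d$ with the vertex-expansion of the giant $\cC_1$ inherited from the proof of \Cref{th: bond percolation}. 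A Chernoff estimate then shows that no $G_2$-edge crosses the boundary inside $V_q\times V_q$ with probability exponentially small in $\min(|S|, |\cC_1 \cap V_q|-|S|)$, and a union bound over such $S$ (controlled by the isoperimetric profile) eliminates them as potential components. This yields a unique giant of order $\Omega(q 2^d)$ and bounds every other component of $(G_1 \cup G_2)[V_q]$ by $O(d)$.

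For part \ref{i: mixed diameter}, the bound $2^d/d^8$ carries enormous slack. By \Cref{th: bond percolation}\ref{i: bond diameter}, $\cC_1$ has $G_1$-diameter at most $C_1 d(\log d)^2$. Given two vertices $u,v$ in the giant of $Q^d_{c/d}(q)$, I would take a short $G_1$-path between them in $\cC_1$ and bypass each $V_q$-deleted vertex by a short detour using $G_2$-edges: balls of polylogarithmic radius in $\cC_1$ around each vertex contain, by Chernoff, many $V_q$-vertices and many $G_2$-edges between them, providing alternative local routes. This yields a diameter bound polynomial in $d$, comfortably below $2^d/d^8$.

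The principal obstacle will be the sprinkling step in part \ref{i: mixed giant}: the $Q^d$-edge expansion of subsets $S \subseteq \cC_1 \cap V_q$ must be controlled sharply enough to beat the union bound over such $S$. \Cref{th: bond percolation} does not by itself supply such isoperimetric estimates, so one would either revisit its proof (which already establishes vertex-expansion of the giant at multiple scales) or combine the classical edge-isoperimetric inequality on $Q^d$ with a concentration bound for the edges of $Q^d$ between $S$ and $\cC_1 \setminus S$ whose both endpoints lie in $V_q$.
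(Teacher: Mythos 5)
Your proposal takes a genuinely different route from the paper for both parts, and in both cases it faces real obstacles.

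For part \ref{i: mixed giant}, the paper does \emph{not} argue via sprinkling from the bond-percolated giant; it simply observes that the proof of the corresponding statement for $Q^d_p$ in \cite{K23} (Theorem 2 and Lemmas 4--8 therein) is an exploration/branching-process argument that carries over to the mixed model $Q^d_p(q)$ with routine modifications, since adding vertex percolation only rescales the relevant offspring distributions. Your sprinkling plan --- write $Q^d_{c/d}=G_1\cup G_2$, take the giant $\cC_1$ of $G_1$, intersect with $V_q$, then reconnect with $G_2$-edges --- has a familiar but serious difficulty that you flag and do not resolve: the union bound is over arbitrary unions $S$ of $G_1[V_q]$-components inside $\cC_1\cap V_q$, and after vertex deletion these components can be very small and very numerous, so the number of such unions is doubly exponential in $d$ unless one first proves a structural statement (e.g.\ that most of $\cC_1\cap V_q$ already lies in components of size $\mathrm{poly}(d)$ or larger before sprinkling). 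Proving such a statement requires going back into the internal structure of $\cC_1$, which Theorem \ref{th: bond percolation} as a black box does not give you; at that point you are essentially re-deriving the arguments of \cite{K23}, which is exactly what the paper proposes to do directly, without the detour through the bond-percolated model.

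For part \ref{i: mixed diameter}, the paper's argument is much simpler and more robust than your path-repair scheme, and is where most of the preparatory work in Section 2 is invested. It first proves an expansion lemma (Lemma \ref{l: expansion}): \textbf{whp} every connected set $S$ in $Q^d_p(q)$ with $|S|\in[d,d^{10}]$ has $|N_{Q^d_p(q)}(S)|\ge a|S|$ for a constant $a>0$, via a first-moment count over trees (Lemma \ref{l: trees}) combined with Harper's inequality (Lemma \ref{l: Harper}). Given this, the diameter bound is a two-line volume/pigeonhole argument: every vertex of the giant $L_1$ has $|B(v,2d)|\ge d^{10}$ by iterating expansion, and along a shortest path of length $t$ one can pick $t/5d$ vertices whose radius-$2d$ balls are pairwise disjoint, forcing $t\le 5\cdot 2^d/d^9$. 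Your alternative --- take a short $\cC_1$-path in $G_1$ and repair each $V_q$-deleted vertex by a local detour using $G_2$-edges --- is substantially harder to make rigorous: one has to handle runs of several consecutive deleted vertices, ensure the detours themselves lie entirely inside $V_q$, ensure they do not collide with each other or with the original path, and extract enough local expansion of $\cC_1$ at polylogarithmic scales, none of which is provided by Theorem \ref{th: bond percolation} in the form you invoke it. So both the decomposition (sprinkling from bond percolation) and the key lemma (local detours) differ from the paper's, and in both cases the paper's direct approach in the mixed model avoids the gaps you identify.
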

Let us stress here that Theorem~\ref{th: mixed percolation}\ref{i: mixed diameter} is far from being tight, however this crude estimate will suffice for our needs. The proof of Theorem ~\ref{th: mixed percolation}\ref{i: mixed giant} follows the proof of~\cite[Theorem~2]{K23} with very straightforward modifications in Lemmas 4--8 therein due to the vertex percolation.

We turn our attention to the proof of Theorem~\ref{th: mixed percolation}\ref{i: mixed diameter}, which requires some preparation. The following lemma provides an upper bound on the number of $k$-vertex trees in a graph
$G$ of maximum degree $d$. It is an immediate consequence of \cite[Lemma 2]{BFM98}.
\begin{lemma}\label{l: trees}
Let $G$ be a graph of maximum degree at most $d$,  $v\in V(G)$ and $k\ge 1$ be an integer. Denote by $t_{k}(G,v)$ the number of trees on $k$ vertices in $G$ rooted~at~$v$. Then,
\begin{align*}
     t_k(G,v)\le (\e d)^{k-1}.
\end{align*}
\end{lemma}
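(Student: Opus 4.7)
The plan is to bound $t_k(G,v)$ by the number of $k$-vertex rooted subtrees of the infinite $d$-ary rooted tree $\tilde{T}_d$ (in which every vertex has exactly $d$ children) that contain its root, and then to evaluate and estimate this count via a generating-function argument followed by Stirling's inequality.

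Fix an arbitrary total order on $V(G)$; this induces, at every vertex $u \in V(G)$, a canonical enumeration $1, 2, \ldots$ of the at most $d$ neighbours of $u$ in $G$. Label the $d$ children of each vertex of $\tilde{T}_d$ by $1, \ldots, d$, and write $\tilde{v}$ for the root of $\tilde{T}_d$. Given any tree $T$ on $k$ vertices in $G$ rooted at $v$, define an image subtree $\phi(T) \subseteq \tilde{T}_d$ recursively: send $v$ to $\tilde{v}$, and whenever $u \in V(T)$ has already been mapped to $\tilde{u} \in V(\tilde{T}_d)$, if the children of $u$ in $T$ are the $i_1$-th, \ldots, $i_m$-th neighbours of $u$ in $G$ with $i_1 < \cdots < i_m$, then send these children respectively to the $i_1$-th, \ldots, $i_m$-th children of $\tilde{u}$. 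Injectivity of $\phi$ then follows by induction on depth: at each $\tilde{u}$, the positions occupied in $\phi(T)$ specify uniquely which of $u$'s canonical $G$-neighbours lie in $T$. Consequently $t_k(G,v) \le T_k$, where $T_k$ denotes the number of $k$-vertex rooted subtrees of $\tilde{T}_d$ containing $\tilde{v}$.

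To count $T_k$, set $T(x) = \sum_{k \ge 1} T_k x^k$. Decomposing each rooted subtree according to whether each of the $d$ children of $\tilde{v}$ is absent or present (and invoking the self-similarity of $\tilde{T}_d$), we obtain the functional equation $T(x) = x(1 + T(x))^d$, whence Lagrange inversion yields
\[
T_k \;=\; \frac{1}{k}[y^{k-1}](1 + y)^{dk} \;=\; \frac{1}{k}\binom{dk}{k-1}.
\]
For $k \ge 2$, combining $\binom{dk}{k-1} \le (dk)^{k-1}/(k-1)!$ with the Stirling inequality $(k-1)! \ge ((k-1)/\e)^{k-1}$ and with $(k/(k-1))^{k-1} \le \e$ gives $T_k \le \e(\e d)^{k-1}/k \le (\e d)^{k-1}$ for $k \ge 3$, while the two remaining cases follow directly from $T_1 = 1$ and $T_2 = d$.

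The only delicate point in this plan is verifying the injectivity of $\phi$; once this is established, the derivation of the functional equation for $T(x)$ and the concluding Stirling-type estimate are entirely routine.
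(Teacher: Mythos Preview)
Your argument is correct. The injection $\phi$ is well-defined since every vertex of $G$ has at most $d$ neighbours (so the labels $i_1,\ldots,i_m$ all lie in $[d]$), and the inverse reconstruction you sketch by induction on depth indeed recovers $T$ from $\phi(T)$. The generating-function identity $T(x)=x(1+T(x))^d$, the Lagrange inversion formula $T_k=\tfrac{1}{k}\binom{dk}{k-1}$, and the Stirling estimate leading to $T_k\le(\e d)^{k-1}$ are all standard and correctly applied; the small-case checks for $k=1,2$ close the remaining gap.

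As for comparison with the paper: the paper does not prove this lemma at all but simply invokes it as an immediate consequence of \cite[Lemma~2]{BFM98}. Your proof is therefore a self-contained substitute for that citation. The embedding into the infinite $d$-ary tree followed by the Fuss--Catalan count is the classical route to this bound and is essentially the argument behind the cited result, so in spirit you are reproducing what the reference provides rather than offering a genuinely different approach.
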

We will also use a simplified version from~\cite{Gal03} of the well-known Harper's vertex-isoperimetric inequality~\cite{H66}.

\begin{lemma}[Lemma~6.2 in~\cite{Gal03}]\label{l: Harper}
There exists a constant $c_1 > 0$ such that, for every set $S\subseteq V(Q^d)$ of size $|S|\le d^{10}$, one has $|N(S)|\ge c_1d|S|$.
\end{lemma}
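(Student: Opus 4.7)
The plan is to reduce the claim to Harper's classical vertex-isoperimetric inequality on $Q^d$: among all sets $S \subseteq V(Q^d)$ of prescribed size $k$, the quantity $|N[S]| = |S \cup N(S)|$ is minimised when $S = F_k$, the initial segment of size $k$ in the simplicial order on $\{0,1\}^d$. Such an $F_k$ has the form $B(0,r-1) \cup T$, where $B(0,r-1)$ is the Hamming ball of radius $r-1$ around the all-zero vertex, $T$ is a subset of layer $r$ of size $\ell$, and $r$, $\ell$ are determined by $k = \sum_{i<r}\binom{d}{i} + \ell$ with $0 \le \ell < \binom{d}{r}$. Since the upper shadow of $B(0,r-1)$ already covers all of layer $r$, one has $N[F_k] = B(0,r) \cup W$, where $W$ is the set of neighbours of $T$ in layer $r+1$. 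A straightforward edge-count between layers $r$ and $r+1$ gives $|W| \ge \ell(d-r)/(r+1)$, since each vertex of $T$ has $d-r$ upward neighbours while each vertex of layer $r+1$ has only $r+1$ downward neighbours.

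Given $|S| = k \le d^{10}$, I would first observe that $\binom{d}{11} \ge d^{10}$ for all $d$ sufficiently large, so the associated $r$ satisfies $r \le 11$. Combining Harper's inequality with the previous shadow bound,
\begin{align*}
|N(S)| \;\ge\; |N[F_k]| - k \;\ge\; \binom{d}{r} - \ell + \frac{\ell(d-r)}{r+1}.
\end{align*}

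To convert this into $|N(S)| \ge c_1 d k$, set $a := \binom{d}{r}$ and $b := (d-r)/(r+1)$, and note that $b \ge d/24$ for $r \le 11$ and $d$ large, so the display simplifies to $|N(S)| \ge a + \ell(b-1)$. A telescoping estimate using $\binom{d}{i}/\binom{d}{i+1} = (i+1)/(d-i) \le 1/b$ for $i < r$ yields $\sum_{i<r}\binom{d}{i} \le 2a/b$, and hence $k \le 2a/b + \ell$. Splitting into the two cases $\ell \le 2a/b$ (numerator at least $a$, denominator at most $4a/b$, ratio at least $b/4$) and $\ell > 2a/b$ (numerator at least $\ell(b-1)$, denominator at most $2\ell$, ratio at least $(b-1)/2$), one concludes $|N(S)|/k \ge b/4 \ge d/96$ in both regimes, establishing the bound with $c_1 = 1/96$ (any smaller constant would handle the finitely many small $d$).

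The main external input is Harper's theorem itself, which is classical and which we cite via \cite{Gal03, H66}; the remaining argument is elementary. The only point requiring any care is the intermediate range of $\ell$, which the two-case split dispatches cleanly by balancing the contribution from completing layer $r$ against the contribution from the upward shadow of $T$ in layer $r+1$.
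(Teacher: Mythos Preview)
Your argument is correct. The reduction to Harper's theorem is the right move, and your quantitative extraction from the initial-segment structure is clean: the edge-count for the upper shadow of $T$, the telescoping bound $\sum_{i<r}\binom{d}{i}\le 2a/b$, and the two-case split according to whether $\ell$ or $a/b$ dominates all check out. One very minor point: the threshold $\binom{d}{11}\ge d^{10}$ only kicks in for rather large $d$ (roughly $d\gtrsim 11!$), but as you note, any smaller constant $c_1$ absorbs the finitely many exceptional dimensions.

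For comparison with the paper: the paper does not prove this lemma at all. It is quoted verbatim as Lemma~6.2 from~\cite{Gal03} (itself a simplified consequence of Harper's inequality~\cite{H66}) and used as a black box in the proof of Lemma~\ref{l: expansion}. So you have supplied a self-contained derivation where the paper is content to cite; your approach is exactly the standard one, deducing linear-in-$d$ expansion for polynomially small sets from the fact that Hamming balls are the isoperimetric minimisers and that small balls have radius $O(1)$.
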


With these two lemmas at hand, we can show that sets $S\subseteq V(Q^d)$ of size $k\in [d, d^{10}]$ typically have a constant vertex-expansion in $Q^d_p(q)$, assuming they span a connected subgraph of $Q^d_p(q)$.
\begin{lemma}\label{l: expansion}
Let $q\in (0,1]$. Then, there exist positive constants $c_2=c_2(q)$ and $a=a(c_2)$ such that, for all $c\geq c_2$, with $p=c/d$, the following holds \textbf{whp}. For every $S\subseteq V(Q^d_p(q))$ of size $k\in [d, d^{10}]$ such that $Q^d_p(q)[S]$ is connected, we have that $|N_{Q^d_p(q)}(S)|\ge ak$.
\end{lemma}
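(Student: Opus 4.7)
The plan is to apply a union bound over all pairs $(v,T)$ where $T$ is a tree in $Q^d$ on $k$ vertices rooted at $v$, with $k\in[d,d^{10}]$. Indeed, if $Q^d_p(q)[S]$ is connected with $|S|=k$, then $S\subseteq V_q$ and some spanning tree of $Q^d[S]$ lies inside $Q^d_p$. By \Cref{l: trees} (applied to $Q^d$, which has maximum degree $d$), the number of such rooted trees is at most $2^d(\e d)^{k-1}$.

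For a fixed $(v,T)$, set $S:=V(T)$. Split the randomness into three mutually independent pieces: the vertex retentions inside $S$, the edge retentions inside $S$, and everything else (edges on the cut $(S,V(Q^d)\setminus S)$ together with vertex retentions outside $S$). The first piece controls $\{S\subseteq V_q\}$, which has probability $q^k$; the second controls $\{T\subseteq Q^d_p\}$, which has probability $p^{k-1}=(c/d)^{k-1}$. Conditionally on both of these events, for each $u\in N_{Q^d}(S)$ let
\[
X_u:=\mathds{1}\{u\in V_q\text{ and at least one }Q^d\text{-edge from }u\text{ to }S\text{ is in }Q^d_p\}.
\]
The $X_u$ are mutually independent since they involve disjoint vertex retentions (distinct $u$'s) and disjoint edge sets (edges from $u$ to $S$ for distinct $u$), and each satisfies $\mathbb{P}(X_u=1)\ge q\big(1-(1-p)^{e_{Q^d}(u,S)}\big)\ge qp=qc/d$. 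By \Cref{l: Harper}, $|N_{Q^d}(S)|\ge c_1 dk$, hence $\mathbb{E}[\sum_u X_u]\ge c_1qck$. Setting $a:=c_1qc_2/2$, \Cref{l: chernoff} gives $\mathbb{P}(|N_{Q^d_p(q)}(S)|<ak)\le 2\exp(-c_1qck/12)$ for every $c\ge c_2$, since then $ak\le \mathbb{E}[\sum_u X_u]/2$.

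Combining these estimates, the probability that a bad $S$ of size $k$ exists is at most
\[
2^d(\e d)^{k-1}\cdot q^k\cdot(c/d)^{k-1}\cdot 2\exp\!\left(-\frac{c_1qck}{12}\right)=2^{d+1}(\e c)^{k-1}q^k\exp\!\left(-\frac{c_1qck}{12}\right).
\]
Choosing $c_2=c_2(q)$ large enough that $c_1qc/12>\log 2+2+\log c+\log(1/q)$ for every $c\ge c_2$, and using $k\ge d$, the right-hand side is at most $2\e^{-k}$. Summing the geometric series over $k\in[d,d^{10}]$ yields $O(\e^{-d})=o(1)$, finishing the proof.

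The main obstacle is the independence bookkeeping: one must verify that conditioning on the presence of the tree $T$ does not affect the random bits controlling the cut edges or the vertex retentions outside $S$, and that the indicators $\{X_u\}_{u\in N_{Q^d}(S)}$ are mutually independent under that conditioning. Once the three randomness blocks (internal vertices, internal edges, external) are separated cleanly, Harper's inequality together with Chernoff handle the rest, and the range $k\le d^{10}$ appears only to keep us in the regime where \Cref{l: Harper} applies.
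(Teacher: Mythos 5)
Your proof is correct and follows essentially the same route as the paper: bound bad sets by rooted $k$-vertex trees via \Cref{l: trees}, apply Harper's inequality (\Cref{l: Harper}) to get a linear-in-$kd$ lower bound on the ambient neighbourhood, and then Chernoff plus a union bound over $2^d(\e d)^{k-1}$ trees and $k\in[d,d^{10}]$. The only difference is cosmetic: you spell out the three independent randomness blocks explicitly, whereas the paper invokes this independence more tersely.
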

\begin{proof}
Let $c_1$ be the constant whose existence is guaranteed by Lemma~\ref{l: Harper}. 

On the event that there exists a set $S\subseteq V(Q^d_p(q))$ of size $k\in [d, d^{10}]$ such that $Q^d_p(q)[S]$ is connected and $|N_{Q^d_p(q)}(S)|< ak$, the following holds. There exist $k\in [d, d^{10}]$, a vertex $v\in V(Q^d)$ and a tree $T$ on $k$ vertices in $Q^d$ such that each of the following holds:
\begin{itemize}
    \item $v\in V(T)$;
    \item $T$ is a subgraph of $Q^d_p(q)$;
    \item out of the $|N_{Q^d}(V(T))|$ neighbours of $V(T)$ in $Q^d$, at most $ak$ belong to $V(Q^d_p(q))$ and are connected to $V(T)$ in $Q^d_p(q)$.
\end{itemize}
There are $2^d$ choices for $v$ and, thereafter, by Lemma~\ref{l: trees}, at most $(\e d)^{k-1}$ many choices for a tree $T\subseteq Q^d$ on $k$ vertices that contains $v$. Each such tree $T$ is present in $Q^d_p(q)$ with probability $q^kp^{k-1}$. 
Conditionally on $T \subseteq Q^d_p(q)$, each vertex in $N_{Q^d}(V(T))$  belongs to $V(Q^d_p(q))$ and is connected to $T$ in $Q^d_p(q)$ independently with probability at least $pq$. Moreover, Lemma~\ref{l: Harper} and the fact that $k\le d^{10}$ imply that $|N_{Q^d}(V(T))|\ge c_1 kd$. Thus, 
\begin{align*}
\mathbb{P}\left(|N_{Q^d_p(q)}(V(T))|<ak\right)\le \mathbb{P}\left(\mathrm{Bin}(c_1kd,pq)\le c_1kdpq/2\right),
\end{align*}
where we assumed that $a\le c_1c_2q/2\le c_1cq/2=c_1dpq/2$. Thus, by the union bound, the probability that there exists a set $S\subseteq V(Q^d_p(q))$ of size $k\in [d, d^{10}]$ such that $Q^d_p(q)[S]$ is connected and $|N_{Q^d_p(q)}(S)|< ak$ is at most
\begin{align*}
\sum_{k=d}^{d^{10}} &2^d (\e d)^{k-1}p^{k-1}q^k  \mathbb P(\mathrm{Bin}(c_1kd,qp)\leq c_1kdqp/2) \leq \sum_{k=d}^{d^{10}} 2^d(\e dpq)^{k} 2\exp\left\{-\frac{ c_1 kdqp }{12}\right\}
\\& = \sum_{k=d}^{d^{10}} 2^{d+1} \left(\e cq\cdot \exp\left\{- \frac{c_1cq}{12} \right\} \right)^k \leq \sum_{k=d}^{d^{10}} 2^{d+1} \big(2^{-2}\big)^k \leq d^{10} 2^{-d+1} =  o(1),
\end{align*}
where the first inequality follows from Chernoff's bound (Lemma~\ref{l: chernoff}) and the second inequality uses that $c\geq c_2(q)$ is suitably large with respect to $q$. This finishes the proof.
\end{proof}

We are now ready to prove Theorem \ref{th: mixed percolation}\ref{i: mixed diameter}.
\begin{proof}[Proof of Theorem \ref{th: mixed percolation}\ref{i: mixed diameter}]
Fix $p = c/d$. By Lemma \ref{l: expansion}, \textbf{whp} every connected set in $Q^d_p(q)$ of size between $d$ and $d^{10}$ has vertex-expansion by a factor of at least $a$ for some constant $a>0$. We assume in the sequel that this property holds. 

We may assume that the giant component of $Q^d_p(q)$, which we denote by $L_1$, satisfies that $|V(L_1)|\ge \frac{2^d}{d^{10}}\ge d$. Thus, for every $v\in V(L_1)$, we have that $|B(v,d)|\ge d$. Hence, by our vertex-expansion assumption, for every $v\in V(L_1)$ we have that $|B(v,2d)|\ge d^{10}$.

Fix two vertices $u,v$ in $L_1$ and let $P=\{v_0,\ldots, v_t\}$ be a shortest path (in $L_1$) between them, where $u=v_0$ and $v=v_t$. Let $x_1,\ldots, x_{t/5d}$ be a set of $\frac{t}{5d}$ vertices along $P$, such that the distance between $x_i$ and $x_j$ is at least $5d$, for any $i \neq j$. Hence, for every $i\neq j$, we have that $B(x_i,2d)\cap B(x_j,2d)=\varnothing$. Thus, $\sum_{i=1}^{t/5d}|B(x_i,2d)|\le |L_1|\le 2^d$. On the other hand, by the above, $\sum_{i=1}^{t/5d}|B(x_i,2d)|\ge \frac{t}{5d}\cdot d^{10}$. Therefore, $t\le \frac{5\cdot2^d}{d^9}<\frac{2^d}{d^8}$. This completes the proof.  
\end{proof}
 
\section{\texorpdfstring{Proof of \Cref{thm:main1}}{}}\label{sec:proof}
The proof proceeds in four steps, which separately guarantee that typically cycles in $Q^d_p$ with any length in the following sets $I_1,\ldots,I_4$ exist:
\begin{itemize}
    \item (Very short cycles) $I_1 = [4,d/5]\cap 2\mathbb{N}$;
    \item (Short cycles) $I_2 = [d/5,d^{10}]\cap 2\mathbb{N}$;
    \item (Medium cycles)  $I_3 = [d^{10}, 2^{d-4}]\cap 2\mathbb{N}$;
    \item (Long cycles) $I_4 = [2^{d-4},(1-\eps)2^d]\cap 2\mathbb{N}$.
\end{itemize}
Since the union $I_1\cup I_2\cup I_3\cup I_4 = [4,(1-\eps)2^d]\cap 2\mathbb{N}$, this is enough to derive \Cref{thm:main1}.

\paragraph{Finding very short cycles.}
First, we show the typical existence of all cycles with lengths in $I_1$. In fact, we will show a slightly stronger result which will be useful in the subsequent cases. 

\begin{lemma}\label{l: very short cycles}
Let $D$ be a sufficiently large integer and $\rho =\rho(D) \geq 10/D$. Then, with probability~at~least $1-\exp\left\{-1.1^D\right\}$, $Q^D_\rho$ contains simultaneously all cycles of even length in the interval $[4,D/8]$.
\end{lemma}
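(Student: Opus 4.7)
Plan.  My approach is to handle each $\ell \in \{2, 3, \ldots, \lfloor D/16 \rfloor\}$ separately, showing that $Q^D_\rho$ contains a cycle of length $2\ell$ with failure probability at most $\exp(-1.2^D)$; a union bound over the $O(D)$ such values of $\ell$ then yields the claim, since $D \cdot \exp(-1.2^D) \le \exp(-1.1^D)$ for $D$ sufficiently large.

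For fixed $\ell$, I exhibit a large family of candidate $2\ell$-cycles in $Q^D$ and apply the second moment method via Janson's inequality.  For each $v \in V(Q^D)$, $S \in \binom{[D]}{\ell}$ and pair of permutations $(\sigma, \tau)$ of $S$, consider the closed walk from $v$ that reaches $v + \chi_S$ by flipping coordinates in the order $\sigma(1), \ldots, \sigma(\ell)$ and returns via $\tau(1), \ldots, \tau(\ell)$.  A short Hamming-weight calculation (essentially: the $k$-th vertex of the up-path has weight $k$ and the $(\ell-k)$-th vertex of the down-path has weight $k$, and these agree iff their coordinate sets coincide) shows that this walk is a simple $2\ell$-cycle precisely when $\{\sigma(1), \ldots, \sigma(k)\} \ne \{\tau(\ell - k + 1), \ldots, \tau(\ell)\}$ for all $k \in \{1, \ldots, \ell-1\}$.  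The probability of equality at a fixed $k$ for random $\sigma, \tau$ is $1/\binom{\ell}{k}$, and a union bound over $k$ shows that a positive fraction of pairs produce a simple cycle.  Together with the choice of $v$ and $S$ and after correcting an $O(\ell)$-fold overcount, this yields $\Omega\!\left(2^D \binom{D}{\ell} (\ell!)^2/\ell\right)$ distinct $2\ell$-cycles in $Q^D$.

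Each such cycle survives in $Q^D_\rho$ with probability $\rho^{2\ell}$, so plugging in $\rho \ge 10/D$ together with the Stirling-type bounds $\binom{D}{\ell} \ge (D/\ell)^\ell$ and $\ell! \ge (\ell/e)^\ell$, the expected number of survivors $\mu$ satisfies $\mu \ge \gamma^D$ for some absolute $\gamma > 1.5$, uniformly for $\ell \in [2, D/16]$.  Janson's inequality then gives
\[
\mathbb{P}[\text{no }2\ell\text{-cycle survives}] \le \exp\!\left(-\frac{\mu^2}{2(\mu + \Delta)}\right),
\]
where $\Delta$ sums joint survival probabilities over ordered pairs of distinct candidate cycles sharing at least one edge.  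Partitioning such pairs by the number $k$ of shared edges and exploiting the fact that sharing $k$ edges forces the two cycles to have a common sub-structure comprising paths of total length $k$ (combinatorially restrictive in the bipartite hypercube), one should obtain $\Delta \le \mu^2/\delta^D$ for some $\delta > 1.1$, which gives the desired tail bound.

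The main obstacle is this variance estimate.  While most candidate pairs have disjoint edge sets and contribute negligibly to $\Delta$, pairs with many shared edges enjoy correspondingly larger joint survival probability, so one must carefully count how a union of paths of total length $k$ in $Q^D$ can be extended to two distinct $2\ell$-cycles.  The interplay between the combinatorial rarity of high-overlap pairs and the gain in survival probability from each shared edge is the delicate point, and controlling it is what allows Janson's inequality to yield a super-exponentially small failure probability.
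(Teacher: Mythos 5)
Your approach is genuinely different from the paper's, and it contains a genuine gap that you yourself flag: the Janson variance bound $\Delta \le \mu^2/\delta^D$ for some $\delta > 1.1$ is asserted but not proved, and it is the crux of the argument. This is not a routine omission. Two of your canonical cycles can share as many as $2\ell - 2$ edges (for instance when they share the pair $(v,S)$ and so live in the same $\ell$-dimensional subcube, differing only in the permutations), and every shared edge multiplies the joint survival probability by a factor $\rho^{-1} = D/10$, which is polynomially large. One must therefore show, uniformly over the overlap size $k \in \{1,\ldots,2\ell-2\}$ and uniformly over $\ell \in [2, D/16]$, that the combinatorial rarity of heavily overlapping pairs overwhelms this polynomial gain per shared edge; that in turn requires an honest structural count of how many cycles of your type contain a prescribed union of sub-paths of total length $k$, and no such count is supplied. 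The conclusion is plausible, but the proof is incomplete at its central step.

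The paper sidesteps the correlation analysis entirely: it partitions $Q^D$ into $2^{D-\ell-1}$ pairwise \emph{disjoint} copies of $Q^{\ell+1}$, each split into two copies of $Q^\ell$ joined by a matching, lower-bounds (via \Cref{lem:short}) the probability that a single such $Q^{\ell+1}$ yields a $(2\ell+2)$-cycle built from two maximal monotone paths and two matching edges, and then uses the exact independence of these events across the disjoint subcubes to get the doubly-exponential failure probability directly; no second-moment computation is needed. Two smaller remarks on your write-up: the overcount in the $(v,S,\sigma,\tau)$ parametrization is only a factor of $2$, not $O(\ell)$, since a cycle of your type has a unique bottom vertex and a unique coordinate set, so the only ambiguity is swapping $\sigma$ and $\tau$ (using $O(\ell)$ is merely conservative and does not harm your lower bound on $\mu$); and your simplicity criterion $\{\sigma(1),\ldots,\sigma(k)\}\ne\{\tau(\ell-k+1),\ldots,\tau(\ell)\}$ is correct, with the union-bound estimate $\sum_{k=1}^{\ell-1}\binom{\ell}{k}^{-1}<1$ holding for all $\ell\ge 2$.
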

\begin{proof}
Since the property of containing cycles of every even length in $[4,D/8]$ is monotone with respect to $\rho$, we may (and do) assume that $\rho=10/D$. Fix an integer $\ell = \ell(D)\in [1,D/16-1]$. Then, by \Cref{lem:short}, the probability that $Q^{\ell}_\rho$ contains a maximal monotone path is bounded from below by $\eta = \eta(\ell) := (\ell \rho/(2 \e))^{\ell}$.

Divide the hypercube $Q^D$ into a family $\cQ$ of $2^{D-\ell-1}$ disjoint copies of $Q^{\ell+1}$. For every copy $Q\in \cQ$ of $Q^{\ell+1}$, we split $Q$ into disjoint copies $Q_0$ and $Q_1$ of $Q^{\ell}$, which are joined by a matching. 
Note that if $(Q_0)_\rho$ and $(Q_1)_\rho$ simultaneously contain maximal monotone paths $P_0,P_1$ whose starting and ending points are also connected in $Q^D_\rho$, then $Q^D_\rho$ contains a cycle of length $2\ell+2$. Note further that these events are independent for every two $Q,Q'\in \cQ$. Thus, the probability that $Q^D_\rho$ does not contain a cycle of length $2\ell+2$ is at most
\begin{align*}
    (1-\rho^2\eta^2)^{2^{D-\ell-1}}&\le \exp\left\{-\rho^2\eta^22^{D-\ell-1}\right\}=\exp\left\{-\left(\frac{10}{D}\right)^2\left(\frac{\ell \rho}{2\e}\right)^{2\ell}2^{D-\ell-1}\right\}\\
    &\le \exp\left\{-D^{-2}\left(\frac{10\ell}{3\e D}\right)^{2\ell}2^D\right\}\\
    &\le \exp\left\{-D^{-2}\left(\frac{2}{(3\e)^{1/8}}\right)^D\right\}\le \exp\left\{-1.2^D\right\}.
\end{align*}
The union bound over the $D/16-1$ cycle lengths completes the proof.
\end{proof}

\begin{proof}[Proof of \Cref{thm:main1} for the lengths in $I_1$]
Follows immediately from Lemma \ref{l: very short cycles} with $D=d$, and assuming that $c\ge 10$.
\end{proof}

\paragraph{Finding short cycles.}

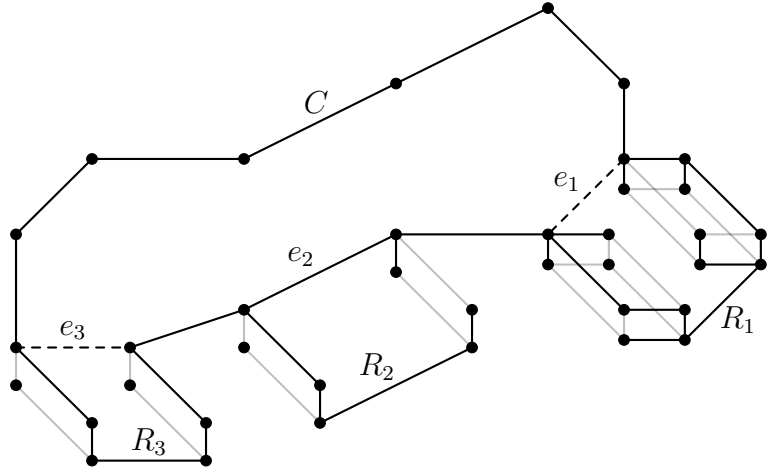
\begin{figure}
\centering
\begin{tikzpicture}[line cap=round,line join=round,x=1cm,y=1cm]
\clip(-11.5,-3.1) rectangle (7,3.1);
\draw [line width=0.8pt,dash pattern=on 3pt off 3pt] (-8,-1.5)-- (-6.5,-1.5); 
\draw [line width=0.8pt] (-6.5,-1.5)-- (-5,-1); 
\draw [line width=0.8pt] (-8,-1.5)-- (-8,0);
\draw [line width=0.8pt] (-8,0)-- (-7,1);
\draw [line width=0.8pt] (-7,1)-- (-5,1);
\draw [line width=0.8pt] (-5,1)-- (-3,2);
\draw [line width=0.8pt] (-3,2)-- (-1,3);
\draw [line width=0.8pt] (-1,3)-- (0,2);
\draw [line width=0.8pt] (0,2)-- (0,1);
\draw [line width=0.8pt,dash pattern=on 3pt off 3pt] (0,1)-- (-1,0); 
\draw [line width=0.8pt] (-1,0)-- (-3,0);
\draw [line width=0.8pt] (-3,0)-- (-5,-1); 
\draw [line width=0.8pt,color=black, opacity=0.25] (-8,-1.5)-- (-8,-2);
\draw [line width=0.8pt,color=black, opacity=0.25] (-8,-2)-- (-7,-3);
\draw [line width=0.8pt,color=black] (-7,-3)-- (-7,-2.5);
\draw [line width=0.8pt] (-7,-2.5)-- (-8,-1.5);
\draw [line width=0.8pt,color=black, opacity=0.25] (-6.5,-1.5)-- (-6.5,-2);
\draw [line width=0.8pt,color=black, opacity=0.25] (-6.5,-2)-- (-5.5,-3);
\draw [line width=0.8pt] (-5.5,-3)-- (-5.5,-2.5);
\draw [line width=0.8pt] (-5.5,-2.5)-- (-6.5,-1.5);
\draw [line width=0.8pt] (-7,-3)-- (-5.5,-3);
\draw [line width=0.8pt,color=black, opacity=0.25] (-5,-1)-- (-5,-1.5);
\draw [line width=0.8pt,color=black, opacity=0.25] (-5,-1.5)-- (-4,-2.5);
\draw [line width=0.8pt] (-4,-2.5)-- (-4,-2);
\draw [line width=0.8pt] (-4,-2)-- (-5,-1);
\draw [line width=0.8pt] (-3,0)-- (-3,-0.5);
\draw [line width=0.8pt,color=black, opacity=0.25] (-3,-0.5)-- (-2,-1.5);
\draw [line width=0.8pt] (-2,-1.5)-- (-2,-1);
\draw [line width=0.8pt,color=black, opacity=0.25] (-2,-1)-- (-3,0);
\draw [line width=0.8pt] (-4,-2.5)-- (-2,-1.5);
\draw [line width=0.8pt] (-1,0)-- (-1,-0.4);
\draw [line width=0.8pt,color=black, opacity=0.25] (-1,-0.4)-- (0,-1.4);
\draw [line width=0.8pt] (0,-1.4)-- (0.8,-1.4);
\draw [line width=0.8pt] (-1,0)-- (-0.2,0);
\draw [line width=0.8pt,color=black, opacity=0.25] (-0.2,0)-- (0.8,-1);
\draw [line width=0.8pt] (0.8,-1)-- (0.8,-1.4);
\draw [line width=0.8pt,color=black, opacity=0.25] (-0.2,0)-- (-0.2,-0.4);
\draw [line width=0.8pt,color=black, opacity=0.25] (-0.2,-0.4)-- (-1,-0.4);
\draw [line width=0.8pt,color=black, opacity=0.25] (-0.2,-0.4)-- (0.8,-1.4);
\draw [line width=0.8pt] (0,-1)-- (0.8,-1);
\draw [line width=0.8pt,color=black, opacity=0.25] (0,-1)-- (0,-1.4);
\draw [line width=0.8pt] (-1,0)-- (0,-1);
\draw [line width=0.8pt] (0,1)-- (0,0.6);
\draw [line width=0.8pt,color=black, opacity=0.25] (0,0.6)-- (0.8,0.6);
\draw [line width=0.8pt] (0.8,0.6)-- (0.8,1);
\draw [line width=0.8pt] (0.8,1)-- (0,1);
\draw [line width=0.8pt,color=black, opacity=0.25] (0,0.6)-- (1,-0.4);
\draw [line width=0.8pt] (1,-0.4)-- (1,0);
\draw [line width=0.8pt,color=black, opacity=0.25] (1,0)-- (1.8,0);
\draw [line width=0.8pt] (1.8,0)-- (1.8,-0.4);
\draw [line width=0.8pt] (1.8,-0.4)-- (1,-0.4);
\draw [line width=0.8pt,color=black, opacity=0.25] (0.8,0.6)-- (1.8,-0.4);
\draw [line width=0.8pt,color=black, opacity=0.25] (0,1)-- (1,0);
\draw [line width=0.8pt] (0.8,1)-- (1.8,0);
\draw [line width=0.8pt] (0.8,-1.4)-- (1.8,-0.4);
\begin{scriptsize}
\draw [fill=black] (-8,-1.5) circle (2pt);
\draw [fill=black] (-8,0) circle (2pt);
\draw [fill=black] (-7,1) circle (2pt);
\draw [fill=black] (-5,1) circle (2pt);
\draw [fill=black] (-3,2) circle (2pt);
\draw [fill=black] (-6.5,-1.5) circle (2pt);
\draw [fill=black] (-5,-1) circle (2pt);
\draw [fill=black] (-1,3) circle (2pt);
\draw [fill=black] (0,2) circle (2pt);
\draw [fill=black] (0,1) circle (2pt);
\draw [fill=black] (-1,0) circle (2pt);
\draw [fill=black] (-3,0) circle (2pt);
\draw [fill=black] (-8,-2) circle (2pt);
\draw [fill=black] (-7,-3) circle (2pt);
\draw [fill=black] (-7,-2.5) circle (2pt);
\draw [fill=black] (-6.5,-2) circle (2pt);
\draw [fill=black] (-5.5,-3) circle (2pt);
\draw [fill=black] (-5.5,-2.5) circle (2pt);
\draw [fill=black] (-5,-1.5) circle (2pt);
\draw [fill=black] (-4,-2.5) circle (2pt);
\draw [fill=black] (-4,-2) circle (2pt);
\draw [fill=black] (-3,-0.5) circle (2pt);
\draw [fill=black] (-2,-1.5) circle (2pt);
\draw [fill=black] (-2,-1) circle (2pt);
\draw [fill=black] (-1,-0.4) circle (2pt);
\draw [fill=black] (0,-1.4) circle (2pt);
\draw [fill=black] (0.8,-1.4) circle (2pt);
\draw [fill=black] (-0.2,0) circle (2pt);
\draw [fill=black] (0.8,-1) circle (2pt);
\draw [fill=black] (-0.2,-0.4) circle (2pt);
\draw [fill=black] (0,-1) circle (2pt);
\draw [fill=black] (0,0.6) circle (2pt);
\draw [fill=black] (0.8,0.6) circle (2pt);
\draw [fill=black] (0.8,1) circle (2pt);
\draw [fill=black] (1,-0.4) circle (2pt);
\draw [fill=black] (1,0) circle (2pt);
\draw [fill=black] (1.8,0) circle (2pt);
\draw [fill=black] (1.8,-0.4) circle (2pt);

\draw (-7.25,-1.27) node {\large{$e_3$}};
\draw (-4.25,-0.3) node {\large{$e_2$}};
\draw (-0.75,0.7) node {\large{$e_1$}};
\draw (-6.25,-2.75) node {\large{$R_3$}};
\draw (-3.25,-1.75) node {\large{$R_2$}};
\draw (1.5,-1.13) node {\large{$R_1$}};
\draw (-4.05,1.75) node {\large{$C$}};
\end{scriptsize}
\end{tikzpicture}
\caption{Illustration of the proof of \Cref{l: inductive step}. Here, $k_1 = 3$ and $k_2 = 2$. Edges in the cubes $R_1$, $R_2$ and $R_3$ missing from $Q^d_p$ are depicted in light grey.
We fail to extend the cycle $\cC$ in $R_2$: indeed, the right part of $R_2$ does not contain a maximal monotone path after percolation.
However, we manage to replace the (dashed) edges $e_1$ and $e_3$ by paths of length $2k_1+1$ and $2k_2+1$ in $(R_1)_p$ and $(R_3)_p$, respectively.}
\label{fig:1}
\end{figure}

The next part of the proof consists of arguing about cycles of all lengths in $I_2$. To show this, we will utilise Lemma~\ref{l: very short cycles} in an inductive way. The following lemma will serve as the key inductive step.

\begin{lemma}\label{l: inductive step}
Let $D$ be a sufficiently large integer and $\rho\geq  2^7\e/D$. Let $H\subseteq Q^D$ be the subcube of dimension $D/2$ obtained by fixing the last $D/2$ coordinates to be $0$. 
Condition on the graph $H_\rho$ and the event that it contains a cycle $C$ of length $2L\in [D/100, D^{11}]$.
Then, with probability at least $1-\exp\{-\Omega(D)\}$, we have that $(Q^D\setminus H)_\rho\cup C$ contains simultaneously all cycles of even length in the interval $[2L+2^{-5}D,2^{-8} LD]$.
\end{lemma}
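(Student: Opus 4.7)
The plan is to enlarge $C$ by replacing some of its edges with longer detours going through the second half of the hypercube, $Q^D\setminus H$. First, I fix a matching $M\subseteq E(C)$ of $L$ pairwise non-adjacent edges of $C$; this guarantees that replacement structures built over different edges of $M$ meet only at their endpoints in $V(C)$, so that several replacements can be combined into a single simple cycle. For each $e=uv\in M$, each dimension $k\in [D/128, D/32]$, and each $k$-subset $J$ of the second-half coordinates, let $R_{e,J}$ denote the $Q^{k+1}$-subcube spanned by $e$ and $J$. A $(J,k)$-replacement at $e$ is a $u$-$v$ path of length $2k+1$ in $(R_{e,J})_\rho$ consisting of two maximal monotone paths, one in the $Q^k$-half over $u$ and one in the $Q^k$-half over $v$, joined by the unique crossing edge at the top of $R_{e,J}$. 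A successful replacement increases the length of $C$ by exactly $2k$, and the interior of the replacement path lies entirely in $Q^D\setminus H$.

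Next, I would show that, with probability at least $1-\exp\{-\Omega(D)\}$, every pair $(e,k)\in M\times[D/128, D/32]$ admits at least one successful $(J,k)$-replacement. The edge sets used by the two monotone paths and the crossing edge are disjoint, so by \Cref{thm:monotone} applied to each $Q^k$-half (valid since $\rho\geq e/k$ for $k\geq D/128$), the probability that a specific $J$ yields a successful $(J,k)$-replacement is at least $(1/2)^2\cdot \rho=\rho/4$. Summing over the $\binom{D/2}{k}$ choices of $J$, the expected number of successful $(J,k)$-replacements at $e$ is already exponentially large in $D$. A second-moment (or Janson-type) calculation, in which correlations between two $(J,k)$-configurations are controlled via the intersection $J\cap J'$, then lifts this expectation into an exponentially small tail bound on non-existence. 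A union bound over the at most $|M|\cdot O(D)$ pairs $(e,k)$ then finishes this step.

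To realise an arbitrary even length $\ell\in[2L+D/32, LD/256]$, I set $j=(\ell-2L)/2\in[D/64, LD/512]$ and look for a decomposition $j=\sum_{e\in M'}k_e$ with $M'\subseteq M$ and $k_e\in[D/128, D/32]$. Allocating roughly $128L/(3D)$ matching edges to each value of $k\in[D/128, D/32]$, all of which yield successful replacements by Step~2, the classical subset-sum argument for consecutive integers with sufficient multiplicity covers every integer in $[D/128, \Theta(LD)]$, and in particular every $j$ in the target range. Performing the $|M'|$ corresponding replacements and leaving the remaining edges of $C$ untouched then produces a simple cycle of length exactly $\ell$ in $(Q^D\setminus H)_\rho\cup C$.

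The main technical obstacle is the second-moment calculation in Step~2: two configurations at $e$ with subsets $J,J'$ share random edges only inside $R_{e,J}\cap R_{e,J'}=R_{e,J\cap J'}$, so the covariance contribution from a pair $(J,J')$ decays with $|J\cap J'|$. Bounding the total variance by splitting the double sum over $(J,J')$ according to $|J\cap J'|$ and estimating each piece should yield the desired exponential tail, but requires checking that the resulting geometric-like series is dominated by $\mathbb{E}[X_{e,k}]^2$ by an exponential factor rather than merely a polynomial one, which I expect to be the most delicate part of the argument.
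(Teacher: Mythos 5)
Your overall strategy---replacing matching edges of $C$ by longer detours through $Q^D\setminus H$, each built from a crossing edge plus two maximal monotone paths---is the same as the paper's. The divergence is in how the concentration is obtained. The paper fixes a \emph{single} increment $k_1=2^{-5}D$ (and one remainder $k_2\in[2^{-6}D,3\cdot 2^{-6}D]$), exposes only the layer-$k_1$-to-layer-$k_1$ crossing edges for each $e_i$, and declares $e_i$ good if such a crossing edge appears (probability $\ge 9/10$) and the two resulting $k_1$-cubes each contain a maximal monotone path (probability $\ge 1/2$ each by Theorem~\ref{thm:monotone}). Because the subcubes $R_i$ are pairwise disjoint, these trials are independent across $i$, so the number of good indices dominates $\mathrm{Bin}(L/2,9/40)$; one Chernoff bound gives failure probability $\exp\{-\Omega(L)\}=\exp\{-\Omega(D)\}$. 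The target length $2\ell=2L+2tk_1+2k_2$ is then reached simply by using $t\le 2^{-4}L < L/9$ of the good indices plus one for $k_2$; no subset-sum decomposition is required.

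The gap in your version is in Step~2. You want \emph{every} pair $(e,k)$ to admit a successful $(J,k)$-replacement, with failure probability $\exp\{-\Omega(D)\}$ per pair, since the union bound ranges over $\Theta(LD)\le D^{12}$ pairs. A plain second-moment bound gives only $\mathrm{Var}(X_{e,k})/\mathbb{E}[X_{e,k}]^2$, which is typically polynomially small, not exponentially small. Janson's inequality could give exponential decay, but it applies to unions of events of the form ``a fixed edge set is present''---the event ``$(Q(u,w))_\rho$ contains a maximal monotone path'' is a large disjunction of such events, not one of them. To make Janson applicable you would have to re-index by quadruples $(J,P_1,P_2,f)$ where $P_1,P_2$ are concrete monotone paths and $f$ a concrete crossing edge, and then control the correlation term $\Delta$ by summing over both the overlap $|J\cap J'|$ and, within the shared subcube $R_{e,J\cap J'}$, the ways two path-pairs can share edges. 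This is exactly the step you flag as the delicate one, and it is genuinely unresolved: a crude alternative of extracting pairwise edge-disjoint $k$-subcubes over $e$ yields only $O(D/k)=O(1)$ independent trials, which is nowhere near enough for an $\exp\{-\Omega(D)\}$ tail. The paper avoids this entirely by asking only for a good \emph{fraction} of successes at one fixed $k_1$, so a Bernoulli sum and Chernoff suffice, rather than universal success over all $(e,k)$.

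Your Step~3 (the subset-sum covering of the target range by sums of $k_e\in[D/128,D/32]$ with bounded multiplicities) is in principle fine and is a reasonable substitute for the paper's cleaner $k=tk_1+k_2$ decomposition, but it is only needed because Step~2 insists on universal success; with the paper's formulation it becomes unnecessary.
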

\begin{proof}
Fix $2\ell\in [2L+2^{-5}D,2^{-8}LD]$ for $\ell\in \mathbb N$.  Let $k$ be such that $2L+2k=2\ell$ and note that $2^{-6}D\le k\le 2^{-9}LD$. Note further that $k$ can be written as $k=t\cdot k_1+k_2$, where $k_1=2^{-5}D$, $t\le 2^{-4}L$, and $k_2\in [2^{-6}D,3\cdot 2^{-6}D]$ (in particular, $2\ell=2L+2tk_1+2k_2$).

Fix $L$ vertex-disjoint edges in $C$ and denote them by $e_1=u_1u_1',\ldots, e_L=u_Lu_L'$. For each $i\in [L]$, let $R_i$ be the subcube obtained by fixing the \textit{first} $D/2-1$ coordinates on which $\{u_i,u_i'\}$ agree, and letting the other $D/2+1$ coordinates vary. 
Note that, since $H$ has the \textit{last} $D/2$ coordinates fixed to be $0$ and $C\subseteq H$, for each $i\in [L]$, we have that $H\cap R_i=e_i$. Furthermore, for every $i\neq j\in [L]$, we have that $u_j,u_j'$ disagree on some of the first $D/2$ coordinates with each of $u_i,u_i'$, and thus $R_i\cap R_j=\varnothing$.

Now, for each $i\in [L]$, let $R_{u_i}\subseteq R_i$ be the subcube obtained by fixing the first $D/2$ coordinates to be as in $u_i$, and let $R_{u_i'}$ be the subcube obtained by fixing the first $D/2$ coordinates to be as in $u_i'$. 
Note that $V(R_i)=V(R_{u_i})\cup V(R_{u_i'})$, $R_{u_i}\cap R_{u_i'}=\varnothing$, and that each of $R_{u_i}$ and $R_{u_i'}$ shares no edges with $H$. For every $i\in [L]$, we orient the cubes $R_{u_i}$ and $R_{u_i'}$ such that $u_i,u_i'$ are their all-zero coordinate. 
Thus, the $j$-th layer in $R_{u_i}$ (resp.\ in $R_{u_i'}$) is the set of vertices in $R_{u_i}$ (resp.\ in $R_{u_i'}$) at distance $j$ from $u_i$ (resp.\ from $u_i'$). There are $\binom{D/2}{j}$ vertices in these layers. 

Then, for every $i\in [L/2]$, we expose the edges of $(Q^D\setminus H)_\rho$ between the $k_1$-th layer in $R_{u_i}$ and the $k_1$-th layer in $R_{u_i'}$. Since $D$ is sufficiently large, for each $i$, the probability that none of these edges are in $(Q^D\setminus H)_\rho$ is $(1-\rho)^{\binom{D/2}{k_1}}\le \frac{1}{10}$.

For each $i$ such that at least one such edge exist, we choose one of these edges arbitrarily and denote it by $f_i=w_iw_i'$ where $w_i\in V(R_{u_i}), w_i'\in V(R_{u_i'})$. 
For each such $i$, let $Q(u_i,w_i)\subseteq R_{u_i}$ be the unique subcube of dimension $k_1$ which contains $u_i$ and $w_i$. 
Note that there is a natural isomorphism from $Q(u_i,w_i)$ to $Q^{k_1}$ which maps $u_i$ to the all-zero vector; thus, when we talk about monotone paths in $Q(u_i,w_i)$, we mean a path whose image under this isomorphism is monotone. Define $Q(u_i',w_i')\subseteq R_{u_i'}$ in a similar manner. 

We note that $u_iu_i',w_iw_i'\in E((Q^D\setminus H)_\rho\cup C)$, and that all the edges of $Q(u_i,w_i),Q(u_i',w_i')$ have not been exposed so far.  Also, if there is a monotone path from $u_i$ to $w_i$ in $Q(u_i,w_i)_\rho$, and from $u_i'$ to $w_i'$ in $Q(u_i',w_i')_\rho$, then replacing $e_i$ with these two paths and $f_i$ increases the length of $C$ by $2k_1$ (see Figure \ref{fig:1}). Since $\rho k_1=\rho 2^{-5}D>e$, by Theorem \ref{thm:monotone}, the probability that $Q(u_i,w_i)_\rho$ contains a (maximal) monotone path from $u_i$ to $w_i$ is at least $1/2$, and the same holds for the probability of the existence of a maximal monotone path in $Q(u_i',w_i')_\rho$. 
Hence, the number of $i\in [L/2]$ for which both $f_i$ and the pair of maximal monotone paths in $Q(u_i,w_i)$ and $Q(u'_i,w'_i)$ are contained in $Q^D_\rho$ stochastically dominates $\mathrm{Bin}\left(L/2,\frac{9}{10}\cdot\frac{1}{4}\right)$: indeed, the edge $f_i$ is present in $(R_i)_\rho$ with probability at least $\frac{9}{10}$ and, conditionally on it, the said two monotone paths exist independently with probability at least $1/2$ each.
By Lemma \ref{l: chernoff} we have that, with probability $1-\exp\{-\Omega(D)\}$, there are at least $L/9$ such $i$. Choosing $t\le 2^{-4}L<L/9$ such $i$  allows us to increase the length of $C$ by $2tk_1$. 

We can now repeat the same argument for $i\in [L/2+1,L]$, this time seeking only one suitable $i$ for $k_2$ (instead of $t$ for $k_1$), which follows verbatim. This produces, with probability at least $1-\exp\{-\Omega(D)\}$, a cycle of length $2L+2tk_1+2k_2=2\ell$ (see Figure \ref{fig:1}). A union bound over the at most $LD\le D^{12}$ possible choices of $k$ completes the proof.
\end{proof}

With \Cref{l: inductive step} at hand, we are ready to show the typical existence of cycles of lengths in $I_2$. 

\begin{proof}[Proof of \Cref{thm:main1} for the lengths in $I_2$]
For every $i \in \{0,\ldots, 10\}$, define $Q_{(i)}$ to be the subcube of $Q^d$ obtained by fixing the last $(1-2^{i-12})d$ coordinates to be zero, and letting the remaining $2^{i-12}d$ coordinates vary. Let $H_i = (Q_{(i)})_p$.

We claim that there exist constants $b_0, b_1,\ldots,b_{10} >0$ such that \textbf{whp} $H_i$ contains simultaneously every cycle of even length in the interval $[4,b_id^{i+1}]$. We prove the claim inductively. 
As a base case, since $c$ is sufficiently large, by Lemma \ref{l: very short cycles} applied to $H_0=(Q_{(0)})_p$ (that is, with $D=2^{-12} d$), \textbf{whp} $H_0$ contains simultaneously every cycle of even length in the interval $[4,2^{-15}d]$, and so we can take $b_0 = 2^{-15}$. 
Moreover, conditionally on the latter event, a direct application of \Cref{l: inductive step} shows the claim for $i=1$.

Now, fix $i \in [9]$ and suppose that there exists a constant $b_i>0$ such that \textbf{whp} $H_i$ contains simultaneously every cycle of even length in the interval $[4,b_id^{i+1}]$. We expose the edges of $H_i$ and condition on this event.

Fix a cycle $C_i$ of length $b_id^{i+1}-d/2$ in $H_i$. Then, applying Lemma \ref{l: inductive step} with $D = 2^{i-11}d$ to $H_i, C_i$ in the ambient cube $Q_{i+1}$, we see that there exists a constant $b_{i+1}>0$ such that \textbf{whp} $(H_{i+1} \setminus H_i) \cup C_i$ contains simultaneously every cycle of even length in the interval 
\[[b_id^{i+1}-d/2+2^{i-16}d,2^{i-19}b_{i}d^{i+2}] \supseteq [b_id^{i+1},b_{i+1}d^{i+2}],\] 
where we take $b_{i+1}=2^{i-19}b_i$. 
Hence, since $H_i \subseteq H_{i+1}$, \textbf{whp} $H_{i+1}$ contains simultaneously every cycle of even length in the interval $[4, b_id^{i+1}]\cup [b_id^{i+1},b_{i+1}d^{i+2}]=[4,b_{i+1}d^{i+2}]$. 

As a result, there exists a constant $b_{10}>0$ such that \textbf{whp} $H_{10} \subseteq Q^d_p$ contains simultaneously every cycle of even length in the interval $[4,b_{10}d^{11}]\supseteq I_2$, as required.
\end{proof}

\paragraph{Finding cycles with length in the middle range.}
We now turn to finding cycles with lengths in $I_3$. We will build upon the cycle whose typical existence is guaranteed by Theorem~\ref{thm:long cycle} with $q=1$ and $c$ sufficiently large.

Formally, let us partition $Q^d$ into four disjoint subcubes $(Q_{i,j})_{i,j\in \{0,1\}}$ according to the values of the first two coordinates.
Note that, by Theorem~\ref{thm:long cycle}, \textbf{whp} there exists a cycle of length at least $2^{d-3}$ in $(Q_{0,0})_p$. 
In the sequel, we condition on a cycle $C \subseteq Q_{0,0}$ of length at least $2^{d-3}$.

For every $\bm{j}\in\{0,1\}^{d/2-2}$, let $Q_{(0,0)}(\bm{j})$ be the subcube obtained by fixing the first two coordinates to be zero, the vector consisting of the next $d/2-2$ coordinates to be $\bm{j}$, and letting the last $d/2$ coordinates to vary. 
Note that $\cQ_0\coloneqq\{Q_{(0,0)}(\bm{j})\colon \bm{j}\in \{0,1\}^{d/2-2}\}$ is a partition of $Q_{0,0}$ into $2^{d/2-2}$ disjoint hypercubes of dimension $d/2$.
Similarly, for every $\bm{j}\in\{0,1\}^{d/2-2}$, we define $Q_{1,0}(\bm{j})$ to be the subcube obtained by fixing the first two coordinates to be $\{1,0\}$, the vector consisting of the next $d/2-2$ coordinates to be $\bm{j}$, and letting the last $d/2$ coordinates to vary
In particular, we have that $\cQ_1\coloneqq \{Q_{1,0}(\bm{j})\colon \bm{j}\in \{0,1\}^{d/2-2}\}$ is a partition of $Q_{1,0}$ into $2^{d/2-2}$ disjoint hypercubes of dimension $d/2$.

For any length $2\ell\in I_3$, by a straightforward greedy approach, one can find (possibly intersecting) paths $P_1(\ell),\ldots,P_{d^4}(\ell)$ with the following properties:
\begin{enumerate}[\upshape{(P\arabic*)}]
\itemsep -0.5mm
    \item\label{i:paths} each of these paths is contained in $C$,
    \item the length of each path is equal to $2\ell - d^2$,
    \item\label{i:distinct} for every distinct $i,j\in [d^4]$, every endpoint $u$ of $P_i(\ell)$ and every endpoint $v$ of $P_j(\ell)$, $u$ and $v$ are in distinct subcubes in $\cQ_0$,
\end{enumerate}
(indeed, note that there are $\Omega(2^d)$ paths of length $2\ell-d^2$ contained in $C$, and there are $2^{d/2}$ vertices in each subcube in $\cQ_0$).
Given $2\ell\in I_3$ and $i \in [d^4]$, we denote by $u_{i,\ell}$ and $v_{i,\ell}$ the two endpoints of $P_i(\ell)$, and by $u'_{i,\ell}$ and $v'_{i,\ell}$ the neighbours in $Q_{1,0}$ of $u_{i,\ell}$ and $v_{i,\ell}$, respectively. 
Let $\bm{j}(u_{i,\ell}),\bm{j}(v_{i,\ell})$ be the coordinates such that $u_{i,\ell}\in Q_{0,0}(\bm{j}(u_{i,\ell})),v_{i,\ell}\in Q_{0,0}(\bm{j}(v_{i,\ell}))$. 
Then, by construction of $\cQ_0,\cQ_1$, we have that $u_{i,\ell}'\in Q_{1,0}(\bm{j}(u_{i,\ell}))\eqqcolon Q(u'_{i,\ell})$ and $v_{i,\ell}'\in Q_{1,0}(\bm{j}(v_{i,\ell}))\eqqcolon Q(v_{i,\ell}')$. In particular, property~\ref{i:distinct} implies that, for distinct $i,j \in [d^4]$, the two sets $\{ Q(u'_{i,\ell}), Q(v'_{i,\ell})\}$ and $\{Q(u'_{j,\ell}),Q(v'_{j,\ell})\}$ are disjoint. However, note that either set may nevertheless consist of a single subcube.

We require the following lemma.
\begin{lemma}\label{lem:local}
Let $C \subseteq Q_{0,0}$ be a cycle of length at least $2^{d-3}$, $c>0$ be  sufficiently large  and $p = c/d$. 
For each  $2\ell \in I_3$ let $P_1(\ell),\ldots, P_{d^4}(\ell)$ be a family of paths satisfying properties \emph{\ref{i:paths}--\ref{i:distinct}}. Then, \textbf{whp}, for every $2\ell\in I_3$, there is an index $i\in [d^4]$ such that each of the following properties hold simultaneously:
\begin{enumerate}[(i)]
\itemsep -0.5mm
    \item each of the edges $u_{i,\ell}u'_{i,\ell}$ and $v_{i,\ell}v'_{i,\ell}$ belongs to $Q^d_p$,\label{i: first point}
    \item the vertices $u'_{i,\ell},v'_{i,\ell}$ belong to components of order at least $2^{d/2}/d$ in $Q(u'_{i,\ell})_p$ and $Q(v'_{i,\ell})_p$, respectively,\label{i: second point}
    \item the shortest path $P'_{i,\ell}$ between $v'_{i,\ell}$ and $u'_{i,\ell}$ in $(Q_{1,0})_p$ has length at most $d(\log d)^3$.\label{i: third point}
\end{enumerate}
\end{lemma}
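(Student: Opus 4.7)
The plan is to partition the edges of $Q^d$ used in the proof into three pairwise-disjoint (hence independent) parts and handle one of the three conditions with each: the edges within the $d/2$-dimensional subcubes in $\cQ_1$, the remaining edges of $Q_{1,0}$, and the cross edges between $Q_{0,0}$ and $Q_{1,0}$. As a first step I would apply Theorem~\ref{th: bond percolation} to $(Q_{1,0})_p$ (a $(d-2)$-dimensional supercritical hypercube, since the effective parameter $c(d-2)/d$ is large) to get that, whp, $(Q_{1,0})_p$ admits a unique giant component $\cL$ of size at least $y \cdot 2^{d-2}$ with diameter at most $Cd(\log d)^2 \leq d(\log d)^3$, and every other component has size at most $Cd$. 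Call this event $\cE$. For each $i \in [d^4]$, let $Y_i$ denote the event that both $u'_{i,\ell}$ and $v'_{i,\ell}$ lie in components of size at least $2^{d/2}/d$ in their respective subcubes $Q(u'_{i,\ell})_p$ and $Q(v'_{i,\ell})_p$. Since $2^{d/2}/d > Cd$ for $d$ large, on $\cE \cap Y_i$ both vertices must belong to $\cL$, giving (ii) directly, and the diameter bound then gives (iii) for free. Condition (i) will be supplied separately by the cross edges.

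Next, for each $2\ell \in I_3$, I would show that the set $T(\ell) \coloneqq \{i \in [d^4] : Y_i \text{ holds}\}$ has size $\Omega(d^4)$ whp. By property~\ref{i:distinct}, the pairs of subcubes $\{Q(u'_{i,\ell}), Q(v'_{i,\ell})\}$ are pairwise disjoint across $i$, so the events $Y_i$ depend on pairwise-disjoint edge sets and are mutually independent. A single application of Theorem~\ref{th: bond percolation} to a $d/2$-subcube (supercritical since $c/2$ is large), combined with vertex-transitivity of the hypercube, shows that any fixed vertex lies in a subcube-component of size at least $2^{d/2}/d$ with probability at least some constant $y' > 0$; FKG (or direct independence when $Q(u'_{i,\ell}) \neq Q(v'_{i,\ell})$) then yields $\mathbb{P}(Y_i) \geq (y')^2 \eqqcolon \eta$. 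A Chernoff bound produces $\mathbb{P}(|T(\ell)| < \eta d^4/2) \leq \exp(-\Omega(d^4))$, and a union bound over the at most $2^d$ values of $\ell \in I_3$ makes $|T(\ell)| \geq \eta d^4/2$ hold for every such $\ell$ whp.

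Finally, conditional on $T(\ell)$ (which is measurable with respect to the edges of $Q_{1,0}$, hence independent of the cross edges), the events that both $u_{i,\ell}u'_{i,\ell}$ and $v_{i,\ell}v'_{i,\ell}$ lie in $Q^d_p$ for $i \in T(\ell)$ are independent $\mathrm{Bernoulli}(p^2)$ events, as these cross edges are distinct and lie outside $Q_{0,0} \cup Q_{1,0}$. Hence, given $|T(\ell)| \geq \eta d^4/2$, the probability that no $i \in T(\ell)$ satisfies (i) is at most $(1-p^2)^{\eta d^4/2} \leq \exp(-\Omega(d^2))$, and a further union bound over $\ell$ contributes $2^d \exp(-\Omega(d^2)) = o(1)$. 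Summing the three small error sources ($\mathbb{P}(\cE^c)$, the Chernoff failure, and the cross-edge failure) closes the proof. The main subtlety will be calibrating the threshold $2^{d/2}/d$ in (ii) so that it simultaneously exceeds $Cd$ (forcing $\cL$-membership and thereby (iii)) and is attainable with constant probability per vertex (so that Chernoff yields $|T(\ell)| = \Omega(d^4)$).
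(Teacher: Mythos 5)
Your proposal is correct and follows essentially the same route as the paper's proof: both rely on independence of the per-$i$ events via property (P3), the component-size guarantee from Theorem~\ref{th: bond percolation} applied to the $d/2$-dimensional subcubes (using Harris'/FKG when the two subcubes coincide), the observation that a subcube-component of size $2^{d/2}/d \gg Cd$ must sit inside the unique giant of $(Q_{1,0})_p$ whose diameter is at most $d(\log d)^3$, and a union bound over $\ell$. The only difference is cosmetic: you first run a Chernoff bound to certify that the random set $T(\ell)$ of indices satisfying~(ii) has size $\Omega(d^4)$ and then hit $T(\ell)$ with the cross-edge event~(i), whereas the paper bundles~(i) and~(ii) into a single per-$i$ Bernoulli of success probability $\ge p^2 y^2/2$ and argues the Binomial is nonzero; both yield an $\exp(-\Omega(d^2))$ failure probability per $\ell$, so the union bound over the $\le 2^d$ lengths goes through either way.
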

\begin{proof}
Fix a length $2\ell\in I_3$. Then, by Theorem \ref{th: bond percolation}, for every $i\in [d^4]$, the probability that each of \ref{i: first point} and \ref{i: second point} holds for $P_i(\ell)$ is at least $p^2\cdot (1-o(1))y^2\ge p^2y^2/2$, where $y=y(c/2)>0$ (as the dimension of the relevant cubes is $d/2$) is the constant guaranteed by Theorem \ref{th: bond percolation}\ref{i: bond giant}. Indeed, if $Q(u'_{i,\ell}) \neq Q(v'_{i,\ell})$, then this is clear by independence and, if $Q(u'_{i,\ell})=Q(v'_{i,\ell})$, then the claim holds by Harris' inequality (see, e.g.,~\cite[Theorem 6.3.3]{AS16}) and the observation that the property of belonging to a component of order at least $2^{d/2}/d$ in $Q(u'_{i,\ell})_p= Q(v'_{i,\ell})_p$ is increasing.
Moreover, by property \ref{i:distinct}, these events are jointly independent for different $i\in [d^4]$. 
Therefore, we have that, with probability at least $1-\mathbb P(\mathrm{Bin}(d^4, p^2y^2/2) = 0) = 1-o(2^{-d})$, there is some $i \in [d^4]$ such that \ref{i: first point}-\ref{i: second point} hold. In particular, by a union bound, this is true for all $2\ell \in I_3$.

Finally, we note that, by Theorem \ref{th: bond percolation}, \textbf{whp} $(Q_{1,0})_p$ contains a unique component $L$ of order at least $2^d/d$, and all the other components have order $O(d)$. Hence every vertex that lies in a component of size at least $2^{d/2}/d$ belongs to $L$. In addition, by Theorem \ref{th: bond percolation}, $L$ has diameter at most $d(\log d)^3$. 
In particular, \textbf{whp}, for any $i \in [d^4]$ such that \ref{i: first point}-\ref{i: second point} hold, $v'_{i,\ell}$ and $u'_{i,\ell}$ lie in $L$
and are thus joined by a path  $P'_{i,\ell}$ of length at most $d(\log d)^3$ in $(Q_{1,0})_p$. 
\end{proof}

Note that, if the conclusion of \Cref{lem:local} holds for some $2\ell \in I_3$ and $i \in [d^4]$, then there is a cycle $C'=u_{i,\ell}P_i(\ell)v_{i,\ell}v'_{i,\ell}P'_{i,\ell}u'_{i,\ell}u_{i,\ell}$ in $(Q_{0,0} \cup Q_{1,0})_p$ of length at most $2\ell - d^2 + d(\log d)^3 + 2$. Furthermore, $C' \cap Q_{0,0} = P_i(\ell)$.

With \Cref{lem:local} at hand, we are now ready to complete the proof of \Cref{thm:main1} for cycles of lengths in $I_3$.

\begin{proof}[Proof of \Cref{thm:main1} for the lengths in $I_3$]
First, we expose the edges of $(Q_{0,0}\cup Q_{1,0})_p$. 
By Theorem~\ref{thm:long cycle}, \textbf{whp} there exists a cycle $C$ of length at least $2^{d-3}$ in $(Q_{0,0})_p$. 
For each $2\ell \in I_3$, let $P_1(\ell),\ldots, P_{d^4}(\ell)$ be a family of paths satisfying properties \ref{i:paths}-\ref{i:distinct}. 
Then, by \Cref{lem:local}, \textbf{whp}, for each $2\ell \in I_3$, we have that $(Q_{0,0}\cup Q_{1,0})_p$ contains a cycle $C'$ of length $|C'(\ell)|\le 2\ell-d^2+d(\log d)^3 +2$ such that $C'(\ell) \cap Q_{0,0}$ is a path of length $2\ell-d^2$.

For each $2\ell \in I_3$, we fix vertex-disjoint edges $e_1(\ell),\ldots,e_{\ell/3}(\ell)$ in $C'(\ell)\cap Q_{0,0}$. 
For each such edge $e_i(\ell)$, there is a unique path $T_i(\ell)$ of length 3 with the same endpoints as $e_i(\ell)$ whose middle edge lies in the hypercube $Q_{0,1}$.
Note that, by construction, the paths $T_1(\ell),\ldots,T_{\ell/3}(\ell)$ are disjoint from each other and from the set $Q_{0,0}\cup Q_{1,0}$ of exposed edges, and the probability that any of them is entirely included in $Q^d_p$ is $p^3$.
Hence, a standard application of Chernoff's bound (\Cref{l: chernoff}) together with a union bound over the at most $2^d$ possible values of $2\ell \in I_3$ implies that, \textbf{whp}, for every $2\ell\in I_3$, at least $d^{-4}\ell\ge d^2$ of the paths $T_1(\ell),\ldots,T_{\ell/3}(\ell)$ are contained in $Q^d_p$. 
By replacing $(2\ell - |C'(\ell)|)/2\le d^2$ edges $e_i(\ell)$ with $ i \in [\ell/3]$ with the corresponding paths $T_i(\ell)$, this produces a cycle of length exactly $2\ell$, as desired.
\end{proof}

\paragraph{Finding long cycles.} 
Finally, it remains to ensure that cycles of lengths in $I_4$ exist in $Q^d_p$. Given $\delta \in[0,1]$, partition $V(Q^d)$ into sets $V_1=V_1(\delta), V_2=V_2(\delta)$ and $V_3=V_3(\delta)$ where a vertex is assigned to $V_1$ with probability $q_1=1-\delta/2$, and to any of $V_2,V_3$ with probability $q_2 = q_3 = \delta/4$, independently for different vertices.
We will make use of the following lemma.
\begin{lemma}\label{l: typical properties}
For every $\eps\in (0,1)$, there exist constants $c(\eps)>0$ and $\delta\in[0,1]$  such that, with $V_1,V_2,V_3$ as above, for all $c\geq c(\eps)$ with $p = p(d) = c/d$, each of the following holds \textbf{whp}.  
\begin{enumerate}[\upshape{(P\arabic*')}]
    \item\label{prop:A3} $Q^d_p[V_1]$ contains a cycle $C_1$ of length at least $(1-\eps)2^d$.
    \item\label{prop:A4} $Q^d_p[V_2]$ contains a unique component $L_2$ of size at least $2^d/d$ whose diameter is at most $2^d/d^8$.
    \item\label{prop:A5} All but $o(2^d/d^{10})$ vertices have at least one neighbour (in $Q^d$) in $L_2$.
    \item\label{prop:A6} For all but $o(2^d/d^{10})$ edges $uv\in E(Q^d)$, there are vertices $u',v'\in V_3$ such that $u,v,v',u'$ (in this order) form a $4$-cycle in $Q^d$.
\end{enumerate}
\end{lemma}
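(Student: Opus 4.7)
The plan is to establish each of the four properties separately; we set $\delta := \eps/2$ and take $c$ sufficiently large throughout.

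Property~(P1') will follow by a direct application of \Cref{thm:long cycle} with $q = 1-\delta/2$ and $\eps' = \eps/2$: this produces a cycle in $Q^d_p[V_1]$ of length at least $(1-\eps/2)(1-\delta/2)\, 2^d \ge (1-\eps)\, 2^d$. Property~(P2') will be an immediate consequence of \Cref{th: mixed percolation} applied with $q = \delta/4$.

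For Property~(P4'), the plan is to observe that any edge $uv \in E(Q^d)$ (differing in some coordinate $i$) lies in $d-1$ parallel $4$-cycles $u, v, v_j, u_j$, indexed by $j \in [d]\setminus\{i\}$, where $u_j$ and $v_j$ are obtained from $u$ and $v$ by flipping coordinate $j$. Since the pairs $\{u_j, v_j\}$ are vertex-disjoint across $j$, the events $\{u_j, v_j \in V_3\}$ are mutually independent and each of probability $(\delta/4)^2$. Hence the probability that no such $j$ works is at most $(1 - \delta^2/16)^{d-1} = o(1/d^{11})$, and linearity of expectation combined with Markov's inequality will then yield the desired $o(2^d/d^{10})$ bound.

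The main obstacle will be Property~(P3'). My plan is to fix $u \in V(Q^d)$ and, for each $i \in [d]$, define the subcube $R_i := \{w \in V(Q^d) : w_i \neq u_i \text{ and } w_k = u_k \text{ for all } k<i\}$, so that $R_1,\ldots,R_d$ form a partition of $V(Q^d)\setminus\{u\}$; each $R_i$ is a subcube of dimension $d-i$, and the neighbour $v_i$ of $u$ obtained by flipping coordinate $i$ lies in $R_i$. For $i \in [d/2]$, I will let $E_i$ denote the event that the connected component of $v_i$ in $Q^d_p[V_2 \cap R_i]$ has size greater than $Cd$, where $C$ is the constant from \Cref{th: mixed percolation}\ref{i: mixed giant} for $Q^d$. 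The crucial point is that since the $R_i$'s are vertex-disjoint, each $E_i$ depends only on $V_2 \cap R_i$ and on the edges of $Q^d_p$ internal to $R_i$; hence the events $E_1,\ldots,E_{d/2}$ will be mutually independent. For $i \le d/2$, the subcube $R_i$ has dimension at least $d/2$, so applying \Cref{th: mixed percolation}\ref{i: mixed giant} to $R_i$ (with effective percolation parameter $c' = c(d-i)/d \ge c/2$), together with the vertex-transitivity of $R_i$, will give $\mathbb{P}(E_i) \ge \delta/16$ for $c$ sufficiently large. Therefore $\mathbb{P}\bigl(\bigcap_{i \in [d/2]} E_i^c\bigr) \le (1-\delta/16)^{d/2} = o(1/d^{11})$. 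Finally, on the \textbf{whp} event that \Cref{th: mixed percolation}\ref{i: mixed giant} holds for the full cube, every component of $Q^d_p[V_2]$ of size greater than $Cd$ coincides with $L_2$; so whenever some $E_i$ occurs we will have $v_i \in L_2$ and thus $u$ has a neighbour in $L_2$. Linearity of expectation and Markov's inequality will then deliver the desired $o(2^d/d^{10})$ bound on the number of vertices violating~(P3').
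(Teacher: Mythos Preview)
Your argument is correct and follows essentially the same route as the paper. For (P1'), (P2') and (P4') the proofs coincide (the paper fixes $\delta$ via $(1-\delta)(1-\delta/2)=1-\eps$ rather than $\delta=\eps/2$, but this is cosmetic). For (P3'), the paper also reduces to independent events in vertex-disjoint subcubes, each containing one neighbour of the fixed vertex: it takes $\delta d$ subcubes of dimension at least $(1-\delta)d$ by citing Claim~2.2 of~\cite{DK22}, whereas your explicit ``tower'' decomposition $R_1,\ldots,R_{d/2}$ achieves the same independence without the external reference. One small imprecision: the bound $\mathbb{P}(E_i)\ge \delta/16$ does not follow directly from \Cref{th: mixed percolation}\ref{i: mixed giant} and vertex-transitivity alone, which only yield $\mathbb{P}(E_i)\ge (1-o(1))\,y$ for the unspecified constant $y=y(c',q)$; the paper accordingly just writes ``with probability $\alpha$ for some constant $\alpha>0$'', and any positive constant suffices for the rest of your argument.
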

\begin{proof}
Let $\delta\in (0,1)$ be such that $(1-\delta)(1-\delta/2)=1-\eps$. \Cref{thm:long cycle}, with $q=1-\delta$, implies that \textbf{whp} $Q^d_p[V_1]$ contains a cycle of length $(1-\delta)(1-\delta/2)2^d=(1-\eps)2^d$ and hence \ref{prop:A3} holds.

\ref{prop:A4} follows from Theorem \ref{th: mixed percolation}, with $L_2$ being the unique giant component of $Q^d_p[V_2]$.

For \ref{prop:A5}, fix a vertex $v$ and some $\delta d$ of its neighbours in $Q^d$, denoted $w_1,\ldots, w_{\delta d}$. Consider $\delta d$ vertex-disjoint subcubes $Q(1),\ldots, Q(\delta d)$, each of dimension at least $(1-\delta)d$ and each containing exactly one of the $w_i$ (such exist by, for example, Claim 2.2 in~\cite{DK22}). 
Then, by symmetry of the hypercube and by Theorem \ref{th: mixed percolation}, for every $i\in [\delta d]$, with probability $\alpha$ for some constant $\alpha>0$, 
$w_i$ belongs to a giant component in $(Q(i))_p[V_2]$ (which is, in fact, \textbf{whp} part of the giant component of $Q^d_p[V_2]$ by the discrepancy in the sizes of the components ensured in Theorem~\ref{th: mixed percolation}\ref{i: mixed giant}). 
Thus, assuming Theorem~\ref{th: mixed percolation}\ref{i: mixed giant}, the probability that none of the neighbours of $v$ is in the giant component of $Q^d_p[V_2]$ is at most $(1-\alpha)^{\delta d}\le \exp\{-\alpha\delta d\}$. 
Thus, by Markov's inequality, all but $o(2^d/d^{10})$ vertices in $Q^d$ have at least one neighbour in the giant component of $Q^d_p[V_2]$ (where we note that the exponent $10$ in the power of $d$ is rather arbitrary).

For \ref{prop:A6}, fix an edge $uv\in E(Q^d)$. There are $d-1$ distinct pairs of vertices $u',v'$ such that $uvv'u'$ form a 4-cycle in $Q^d$.
The probability that none of these pairs belongs to $V_3$ is $(1-q_3^2)^{d-1}\le \exp\{-\delta^2 d/20\}$. 
Thus, the expected number of edges $uv\in E(Q^d)$ such that there are \textit{no} vertices $u',v'\in V_3$ with $uvv'u'$ forming a 4-cycle is at most $\exp\{-\delta^2 d/20\} d2^d$. 
Thus, by Markov's inequality, \textbf{whp} the latter event holds for $o(2^d/d^{10})$ pairs only, as desired.
\end{proof}

We are ready to complete the proof of \Cref{thm:main1}.
\begin{proof}[Proof of \Cref{thm:main1} for the lengths in $I_4$]
Let $V_1,V_2,V_3$ be as in \Cref{l: typical properties} and $c>0, \delta\in[0,1]$ be such that \ref{prop:A3}-\ref{prop:A6} hold \textbf{whp}. 
We begin by exposing $V_1,V_2,V_3$ and $Q^d_p[V_1] \cup Q^d_p[V_2]$. 
We condition on \ref{prop:A3}-\ref{prop:A6}. We denote by $C_1$ a cycle of length at least $(1-\eps)2^d$ in $Q_p^d[V_1]$, given by \ref{prop:A3}, and by $L_2$ the giant component of $Q^d_p[V_2]$, given by \ref{prop:A4}.

Fix $2\ell\in [2^{d-3}, |C|]$ and $k=2^d/d^8$. We enumerate $C=\{v_1,\ldots, v_{|C|}\}$ and let $S_1=\{v_1,\ldots, v_k\}$ and $S_2=\{v_{2\ell-2k+1},\ldots, v_{2\ell-k}\}$. Note that these sets are well-defined and disjoint. 

By \ref{prop:A5}, the number of vertices $u\in S_1$ with a neighbour $v$ in the giant component of $Q^d_p[V_2]$ is at least $k/2$. 
Next, expose the edges of $Q^d_p$ between $V_1$ and $V_2$. Then, the probability that there are no edges in $Q^d_p$ between $S_1$ and $L_2$ is at most $(1-p)^{k/2}\le \exp\left\{-2^{d-1}/d^9\right\}$. 
A similar argument shows that the probability that there are no edges in $Q^d_p$ between $S_2$ and $L_2$ is also at most $\exp\left\{-2^{d-1}/d^9\right\}$.

Hence, with probability at least $1-2\exp\left\{-2^{d-1}/d^9\right\}$, there exist $u_1\in S_1, u_2\in S_2$, and $w_1,w_2 \in L_2$, such that $u_1w_1,u_2w_2\in E(Q^d_p)$. By   \ref{prop:A4}, there is a path $P \subseteq Q^d_p[V_2]$ of length at most $k$ between $w_1$ and $w_2$. Denote by $P'$ the path between $u_1$ and $u_2$ along the cycle $C$ containing $v_{k+1}$. Then, with probability at least $1-2\exp\{-2^d/d^8\}$, there is a cycle $C'\coloneqq P' \cup P \cup \{u_1w_1,u_2w_2\}$ whose length is between $2\ell-3k+2$ and $(2\ell-k-2)+k+2=2\ell$ (see Figure \ref{fig:2}).

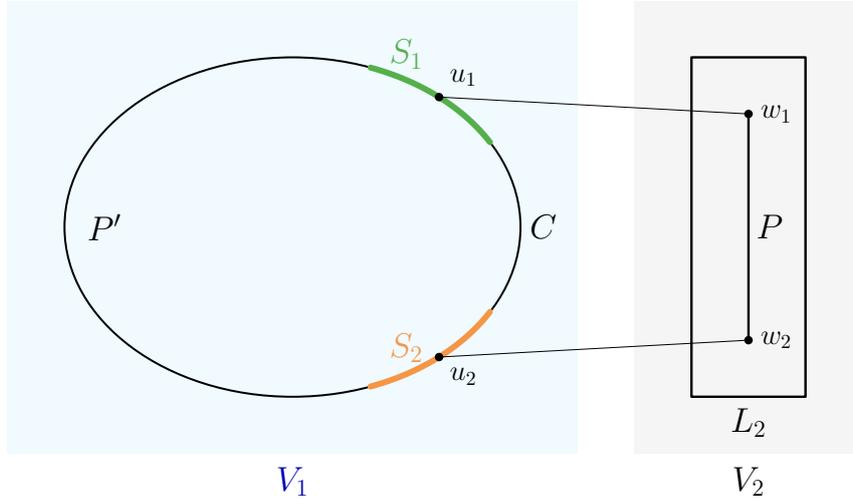
\begin{figure}[H]
\centering
\scalebox{0.75}
{
\begin{tikzpicture}[font=\sffamily, line cap=round, line join=round]

  \definecolor{bgA}{RGB}{240,250,255}
  \definecolor{bgB}{RGB}{245,245,245}
  \definecolor{S1col}{RGB}{85,175,75}    
  \definecolor{S2col}{RGB}{245,150, 70}  
  \tikzset{
    bigcycle/.style={draw=black, line width=1pt},
    arc/.style={line width=3pt},         
    connector/.style={thin},
    nodept/.style={circle,fill=black,inner sep=1.5pt},
    labelsmall/.style={font=\footnotesize\itshape},
  }

  \fill[bgA]  (-5,-4) rectangle (5,4);
  \fill[bgB]  ( 6,-4) rectangle (10,4);

  \draw[bigcycle] (0,0) ellipse (4cm and 3cm);

  \draw[arc, draw=S1col] plot [domain=30:70, samples=50]
    ({4*cos(\x)}, {3*sin(\x)});
  \draw[arc, draw=S2col] plot [domain=-70:-30, samples=50]
    ({4*cos(\x)}, {3*sin(\x)});

  \coordinate (u1) at ({4*cos(50)}, {3*sin(50)});
  \coordinate (u2) at ({4*cos(-50)}, {3*sin(-50)});
  \node[nodept,label=above right:\Large{$u_1$}] at (u1) {};
  \node[nodept,label=below right:\Large{$u_2$}] at (u2) {};

  \coordinate (mS1) at ({4*cos(60)}, {3*sin(60)});
  \coordinate (mP1) at ({4*cos(43)}, {3*sin(43)});
  \coordinate (mS2) at ({4*cos(-60)}, {3*sin(-60)});
  \coordinate (mP2) at ({4*cos(-36)}, {3*sin(-36)});

  \node[labelsmall,text=S1col, above=2pt]  at (mS1) {\LARGE{$S_1$}};
  \node[labelsmall,text=S2col, above=2pt]  at (mS2) {\LARGE{$S_2$}};

  \node at (-3.3,0) {\LARGE{$P'$}};
  \node[labelsmall] at (4.4,0) {\LARGE{$C$}};

  \node[blue!70!black, font=\bfseries] at (0,-4.5) {\LARGE{$V_1$}};

  \begin{scope}[xshift=8cm]
    \draw[very thick] (-1,3) rectangle (1,-3);

    \coordinate (w1) at (0,2);
    \coordinate (w2) at (0,-2);
    \draw[very thick] (w1) -- (w2);
    \node[nodept,label=right:\Large{$w_1$}] at (w1) {};
    \node[nodept,label=right:\Large{$w_2$}] at (w2) {};

    \node[labelsmall, right=3pt] at (-0.1,0) {\LARGE{$P$}};

    \node[labelsmall, below=2pt] at (0,-3) {\LARGE{$L_2$}};
    \node[font=\bfseries] at (0,-4.5) {\LARGE{$V_2$}};
  \end{scope}

  \draw[connector] (u1) -- (8,2);
  \draw[connector] (u2) -- (8,-2);

\end{tikzpicture}
}
\caption{The construction of the cycle $C'$ using the paths $P'\subseteq C$, $P\subseteq L_2$ and the edges $u_1w_1,u_2 w_2$ connecting the endpoints of $P'$ and $P$.}
\label{fig:2}
\end{figure}

By   \ref{prop:A6}, there are at least $\frac{2\ell- 3k + 2}{2} - o(2^d/d^{10})\ge \ell/2$ vertex-disjoint edges $uv$ in $C'$, such that there are vertices $u',v'\in V_3$ such that $uvv'u'$ is a 4-cycle in $Q^d$. 
Since every vertex in $V_3$ can participate in at most $\tbinom{d}{2}\le d^2/2$ such cycles, there are at least $\ell/(2d^2)$ vertex-disjoint $4$-cycles $uvv'u'$ of this form in $Q^d$. 

Finally, we expose the edges in $Q^d_p$ between $V_1$ and $V_3$ and inside $V_3$. 
Then, each of the said $4$-cycles belongs to $Q^d_p$ with probability $p^3$.
Hence, by Chernoff's bound (Lemma \ref{l: chernoff}), the probability that there are less than $2^d/d^6$ vertex-disjoint edges $uv$ in $P'$ such that there are $u',v'\in V_3$ with $uvv'u'$ forming a 4-cycle in $Q^d_p$ is at most $1-\exp\{-2^d/d^6\}$. 
Replacing $2\ell-|\hat C|\le 3k=o(2^d/d^6)$ of these edges with the vertex-disjoint $3$-paths going through $V_3$, we obtain that, with probability at least 
\[1-2\exp\{-2^{d-1}/d^9\}-2\exp\{-2^d/d^6\} = 1-o(2^{-d}),\]
a cycle of length exactly $2\ell$ exists. 
A union bound over the at most $2^d$ choices of $\ell$ completes the proof.
\end{proof}

\section*{Acknowledgement}
This research was funded in part by the Austrian Science Fund (FWF) [10.55776/P36131 (J. Erde), 10.55776/F1002 (M. Kang), 10.55776/ESP624 (L. Lichev), 10.55776/ESP3863424 (M. Anastos)], and by NSF-BSF grant 2023688 (M. Krivelevich). For open access purposes, the authors have applied a CC BY public copyright license to any author accepted manuscript version arising from this submission.

\bibliographystyle{abbrv}
\bibliography{Bib}

\begin{thebibliography}{10}

\bibitem{AKS81a}
M.~{Ajtai}, J.~{Koml\'{o}s}, and E.~{Szemer\'{e}di}.
\newblock {The longest path in a random graph}.
\newblock {\em {Combinatorica}}, 1:1--12, 1981.

\bibitem{AKS82}
M.~Ajtai, J.~Koml\'{o}s, and E.~Szemer\'{e}di.
\newblock Largest random component of a {$k$}-cube.
\newblock {\em Combinatorica}, 2(1):1--7, 1982.

\bibitem{AS16}
N.~{Alon} and J.~H. {Spencer}.
\newblock {\em {The probabilistic method}}.
\newblock Hoboken, NJ: John Wiley \& Sons, fourth edition, 2016.

\bibitem{AKL22}
Y.~Alon, M.~Krivelevich, and E.~Lubetzky.
\newblock Cycle lengths in sparse random graphs.
\newblock {\em Random Structures Algorithms}, 61(3):444--461, 2022.

\bibitem{A23}
M.~Anastos.
\newblock A note on long cycles in sparse random graphs.
\newblock {\em Electron. J. Combin.}, 30(2):Paper No. 2.21, 24, 2023.

\bibitem{ADEK24}
M.~Anastos, S.~Diskin, D.~Elboim, and M.~Krivelevich.
\newblock Climbing up a random subgraph of the hypercube.
\newblock {\em Electron. Commun. Probab.}, 29:13, 2024.
\newblock Id/No 70.

\bibitem{ADEKKL25}
M.~Anastos, S.~Diskin, J.~Erde, M.~Kang, M.~Krivelevich, and L.~Lichev.
\newblock Nearly spanning cycle in the percolated hypercube.
\newblock {\em arXiv preprint arXiv:2505.04436}, 2025.

\bibitem{BFM98}
A.~{Beveridge}, A.~{Frieze}, and C.~{McDiarmid}.
\newblock {Random minimum length spanning trees in regular graphs}.
\newblock {\em {Combinatorica}}, 18(3):311--333, 1998.

\bibitem{Bol84}
B.~Bollob\'{a}s.
\newblock The evolution of sparse graphs.
\newblock In {\em Graph theory and combinatorics ({C}ambridge, 1983)}, pages 35--57. Academic Press, London, 1984.

\bibitem{bollobas2001random}
B.~Bollob\'as.
\newblock {\em Random graphs}, volume~73 of {\em Cambridge Studies in Advanced Mathematics}.
\newblock Cambridge University Press, Cambridge, second edition, 2001.

\bibitem{BKL92}
B.~Bollob\'{a}s, Y.~Kohayakawa, and T.~\L{}uczak.
\newblock The evolution of random subgraphs of the cube.
\newblock {\em Random Structures Algorithms}, 3(1):55--90, 1992.

\bibitem{BC76}
J.~A. Bondy and V.~Chv\'{a}tal.
\newblock A method in graph theory.
\newblock {\em Discrete Math.}, 15(2):111--135, 1976.

\bibitem{Chv72}
V.~Chv\'{a}tal.
\newblock On {H}amilton's ideals.
\newblock {\em J. Combinatorial Theory Ser. B}, 12:163--168, 1972.

\bibitem{CEGKO24}
P.~Condon, A.~Espuny~D\'{\i}az, A.~Gir\~{a}o, D.~K\"{u}hn, and D.~Osthus.
\newblock Hamiltonicity of random subgraphs of the hypercube.
\newblock {\em Mem. Amer. Math. Soc.}, 304(1534):v+132, 2024.

\bibitem{CF90}
C.~Cooper and A.~M. Frieze.
\newblock Pancyclic random graphs.
\newblock In {\em Random graphs '87 ({P}ozna\'{n}, 1987)}, pages 29--39. Wiley, Chichester, 1990.

\bibitem{Dir52}
G.~A. Dirac.
\newblock Some theorems on abstract graphs.
\newblock {\em Proceedings of the London Mathematical Society}, 3(1):69--81, 1952.

\bibitem{DEKK24}
S.~Diskin, J.~Erde, M.~Kang, and M.~Krivelevich.
\newblock Isoperimetric inequalities and supercritical percolation on high-dimensional graphs.
\newblock {\em Combinatorica}, 44(4):741--784, 2024.

\bibitem{DK22}
S.~Diskin and M.~Krivelevich.
\newblock Supercritical site percolation on the hypercube: small components are small.
\newblock {\em Combin. Probab. Comput.}, 32(3):422--427, 2023.

\bibitem{DK24}
S.~Diskin and M.~Krivelevich.
\newblock Expansion in supercritical random subgraphs of expanders and its consequences.
\newblock {\em Random Structures Algorithms}, 65(3):576--600, 2024.

\bibitem{DMCPS24}
N.~Dragani{\'c}, R.~H. Montgomery, D.~Munh{\'a}~Correia, A.~Pokrovskiy, and B.~Sudakov.
\newblock Hamiltonicity of expanders: optimal bounds and applications.
\newblock {\em arXiv preprint arXiv:2402.06603}, 2024.

\bibitem{ER60}
P.~Erd\H{o}s and A.~R\'enyi.
\newblock On the evolution of random graphs.
\newblock {\em Magyar Tud. Akad. Mat. Kutat\'o{} Int. K\"ozl.}, 5:17--61, 1960.

\bibitem{Fdlv79}
W.~Fernandez de~la Vega.
\newblock Long paths in random graphs.
\newblock {\em Studia Sci. Math. Hungar.}, 14(4):335--340, 1979.

\bibitem{FK21}
L.~Friedman and M.~Krivelevich.
\newblock Cycle lengths in expanding graphs.
\newblock {\em Combinatorica}, 41(1):53--74, 2021.

\bibitem{Gal03}
D.~Galvin.
\newblock On homomorphisms from the {H}amming cube to {$\mathbf{Z}$}.
\newblock {\em Israel J. Math.}, 138:189--213, 2003.

\bibitem{H66}
L.~H. Harper.
\newblock Optimal numberings and isoperimetric problems on graphs.
\newblock {\em J. Combinatorial Theory}, 1:385--393, 1966.

\bibitem{HMT09}
D.~Hefetz, M.~Krivelevich, and T.~Szab{\'o}.
\newblock Hamilton cycles in highly connected and expanding graphs.
\newblock {\em Combinatorica}, 29(5):547--568, 2009.

\bibitem{karonckirusinci}
M.~Karo\'nski and A.~Ruci\'nski.
\newblock On the number of strictly balanced subgraphs of a random graph.
\newblock In {\em Graph theory (\L ag\'ow, 1981)}, volume 1018 of {\em Lecture Notes in Math.}, pages 79--83. Springer, Berlin, 1983.

\bibitem{K72}
R.~M. Karp.
\newblock Reducibility among combinatorial problems.
\newblock In {\em Complexity of computer computations. Proceedings of a symposium on the complexity of computer computations.}, pages 85--103. New York-London: Plenum Press, 1972.

\bibitem{KS83}
J.~Koml\'{o}s and E.~Szemer\'{e}di.
\newblock Limit distribution for the existence of {H}amiltonian cycles in a random graph.
\newblock {\em Discrete Math.}, 43(1):55--63, 1983.

\bibitem{Kor77}
A.~D. Kor\v{s}unov.
\newblock Solution of a problem of {P}. {E}rd{\H{o}}s and {A}. {R}\'{e}nyi on {H}amiltonian cycles in nonoriented graphs.
\newblock {\em Diskret. Analiz}, (31):17--56, 90, 1977.

\bibitem{K23}
M.~Krivelevich.
\newblock Component sizes in the supercritical percolation on the binary cube.
\newblock {\em arXiv preprint arXiv:2311.07210}, 2023.

\bibitem{KLS10}
M.~Krivelevich, C.~Lee, and B.~Sudakov.
\newblock Resilient pancyclicity of random and pseudorandom graphs.
\newblock {\em SIAM J. Discrete Math.}, 24(1):1--16, 2010.

\bibitem{KS03}
M.~Krivelevich and B.~Sudakov.
\newblock Sparse pseudo-random graphs are {H}amiltonian.
\newblock {\em J. Graph Theory}, 42(1):17--33, 2003.

\bibitem{LS12}
C.~Lee and W.~Samotij.
\newblock Pancyclic subgraphs of random graphs.
\newblock {\em J. Graph Theory}, 71(2):142--158, 2012.

\bibitem{Ore60}
O.~Ore.
\newblock Note on {H}amilton circuits.
\newblock {\em Amer. Math. Monthly}, 67:55, 1960.

\bibitem{Pos76}
L.~P\'{o}sa.
\newblock Hamiltonian circuits in random graphs.
\newblock {\em Discrete Math.}, 14(4):359--364, 1976.

\end{thebibliography}

\end{document}